\newtheorem{remark}[equation]{Remark}
\numberwithin{equation}{section}
\newcommand{\Div}{\divergence}
\newcommand{\R}{\mathbb R}
\newcommand{\E}{\mathbb E}
\newcommand{\p}{\mathbb P}
\newcommand{\F}{\mathfrak F}
\DeclareMathOperator*{\argmin}{arg\,min}
\newcommand{\dd}{\mathrm d}
\newcommand{\dx}{\, \mathrm{d}x}
\newcommand{\ds}{\, \mathrm{d}\sigma}
\newcommand{\dt}{\, \mathrm{d}t}
\newcommand{\dxt}{\,\mathrm{d}x\, \mathrm{d}t}
\newcommand{\dxs}{\,\mathrm{d}x\, \mathrm{d}\sigma}
\newcommand{\dif}{\mathrm{d}}
\newcommand{\totdif}{\mathrm{D}}
\newcommand{\prst}{\mathbb{P}}
\newcommand{\stred}{\mathbb{E}}
\newcommand{\db}[1]{\textcolor[rgb]{0.00,0.00,1.00}{  #1}}
\title{Space-time approximation of stochastic $p$-Laplace type systems} 
\author{D. Breit\footnotemark[1] \and M. Hofmanov\'a\footnotemark[2] \and S.
  Loisel{}\footnotemark[1] }\date{}
\begin{document}

\maketitle

\renewcommand{\thefootnote}{\fnsymbol{footnote}}
\footnotetext[1]{Heriot-Watt University, Department of Mathematics, Riccarton,
  EH14 4AS Edinburgh, UK. (d.breit@hw.ac.uk,
  s.loisel@hw.ac.uk)}
  \footnotetext[2]{Fakult\"at f\"ur Mathematik, Universit\"at Bielefeld, D-33501 Bielefeld, Germany. (hofmanova@math.uni-bielefeld.de) Financial support by the German Science Foundation DFG via the Collaborative Research Center SFB1283 is gratefully acknowledged.}

\begin{abstract}\\
We consider systems of stochastic evolutionary equations of $p$-Laplace type. We establish convergence rates for a finite-element based space-time approximation, where the error is measured in a suitable quasi-norm. Under natural regularity assumptions on the solution, our main result provides linear convergence in space
and convergence of order $\alpha$ in time for all $\alpha\in(0,\frac{1}{2})$. The key ingredient of our analysis is a random time-grid, which allows us to compensate for the lack of time regularity.
Our theoretical results are confirmed by numerical experiments.\\

 \textbf{Keywords.}
 Parabolic stochastic PDEs, Nonlinear Laplace-type systems, Finite Element Methods, space-time discretization\\

\textbf{MSC 2020:} 35K92, 35R60, 60H15, 65M15, 65M60 \
\end{abstract}

\date{\small \today}


\section{Introduction}
We study the space-time discretization of stochastic evolutionary PDEs of the type
\begin{align}\label{eq:}
\begin{cases}\dd\bfu&=\Div\bfS(\nabla\bfu)\dt+\Phi(\bfu)\,\dd W\quad\text{in}\quad (0,T)\times\mathcal O,\\
\,\,\,\bfu&=0\qquad\qquad\qquad\qquad\qquad\,\,\,\,\,\,\, \text{in}\quad (0,T)\times\partial\mathcal O,\\
\bfu(0)&=\bfu_0\qquad\qquad\qquad\qquad\qquad\,\,\, \text{in}\quad \mathcal O,\end{cases}
\end{align}
in a bounded Lipschitz domain $\mathcal{O}\subset\R^d$ and a finite time interval $(0,T)$, where the solution $\bfu$ takes values in $\R^D$ for some $D\in\mathbb{N}$. Here $\bfS$ is a nonlinear operator with $p$-growth (see \eqref{eq:Sp} for a precise definition), for instance the $p$-Laplacian
\begin{align}\label{eq:S}
\bfS(\nabla\bfu)=\big(\kappa+|\nabla\bfu|\big)^{p-2}\nabla\bfu,
\end{align}
where $p\in(1,\infty)$ and $\kappa\geq0$.
We assume that $W$ is a cylindrical Wiener process  in a Hilbert space defined on a filtered probability space $(\Omega,\mathfrak{F},(\mathfrak F_t)_{t\geq0},\p)$  and $\Phi$ has linear growth (see Section \ref{subsec:prob} for more details).
This system can be understood as a model for a large class of problems important for applications. We particularly mention the flow of
 non-Newtonian fluids (the literature devoted to the deterministic setting is very extensive, for the corresponding stochastic counterparts we refer the reader to \cite{Br,TeYo,Yo}).
 \subsection{The deterministic problem}
The deterministic analogue of \eqref{eq:}, which reads as
\begin{align}\label{eq:0}
\begin{cases}\partial_t\bfu&=\Div\bfS(\nabla\bfu)+\bff\quad\text{in}\quad (0,T)\times\mathcal O,\\
\,\,\,\bfu&=0\qquad\qquad\qquad\,\,\,\, \text{in}\quad (0,T)\times\partial\mathcal O,\\
\bfu(0)&=\bfu_0\qquad\qquad\qquad \text{in}\quad \mathcal O,\end{cases}
\end{align}
with a given function $\bff$, seems to be well understood. Existence of a unique weak solution follows from the classical monotone operator theory. Also the regularity of solutions is well-known, see \cite{DiFr,DuMiSt,LaUrSo,Wi}. 

For the numerical approximation
of \eqref{eq:0} one approximates the time-derivative by a difference quotient and solves in every (discrete) time-step a stationary problem. The latter one finally has to be approximated by a finite element method. In order to understand
the convergence properties of the scheme it proved beneficial to introduce the nonlinear quantity $$\bfF(\bfxi)=(\kappa+|\bfxi|)^{\frac{p-2}{2}}\bfxi,\quad\bfxi\in\R^{D\times d},$$
which linearizes the approximation error in a certain sense. Starting from the paper \cite{BaLi1} it has been known that the correct way to express the  error   is  through  the quasi-norm
\begin{align*}
\|\bfF(\nabla\bfu)-\bfF(\nabla\bfv)\|_{L^2(\mathcal{O})}^2\sim\int_{\mathcal{O}} (\kappa+|\nabla\bfu|+|\nabla\bfv-\nabla\bfu|)^{p-2}|\nabla\bfu-\nabla\bfv|^2\dx
\end{align*}
as a distance  of functions $\bfu,\bfv\in W^{1,p}(\mathcal{O})$.
This led to optimal convergence results for finite element based space-time approximations of \eqref{eq:0}
in \cite{DER} for all $1<p<\infty$ (suboptimal results have been obtained in \cite{BaLi2}). 

More precisely, it was shown that
\begin{align*}
\max_{1\leq m\leq M}\|\bfu(t_m)-\bfu_{h,m}\|_{L^2(\mathcal O)}^2+\tau\sum_{m=1}^M\|\bfF(\nabla\bfu(t_m))-\bfF(\nabla\bfu_{h,m})\|_{L^2(\mathcal O)}^2\leq\,c\,\big(h^2+\tau^2\big),
\end{align*}
 where $(\bfu_{h,m})_{m=1}^M$ denotes the discrete solution with space discretization parameter $h$, time-discretization $\tau=\frac{T}{M}$ and time points $t_m=\tau m $, $m=1,\dots, M$.
The constant $c$ depends on the geometry of $\mathcal O$, on $p$, as well as on some quantities involving $\bfu$ and arising from the following regularity properties:  
\begin{align}\label{eq:regt1}
\begin{aligned}
&\bfF(\nabla\bfu)\in L^2(0,T;W^{1,2}(\mathcal O)),\ 
\bfF(\nabla\bfu) \in W^{1,2}(0,T;L^2(\mathcal O)),\\ 
&\nabla\bfu\in L^\infty(0,T;L^{2}(\mathcal O)),\ \partial_t\bfu\in L^\infty(0,T;L^{2}(\mathcal O)).
\end{aligned}
\end{align}

\subsection{The linear stochastic problem}
Concerning the stochastic problem \eqref{eq:}, the literature dedicated to the regularity theory for linear SPDEs (e.g. \eqref{eq:S} with $p=2$) is quite extensive, we refer to \cite{Kr,KrRo1,KrRo2} and the references therein. One can show that
\begin{align}\label{eq:1602}
\bfu\in L^2(\Omega;C^\alpha([0,T];W^{k,2}(\mathcal{O})))\quad\forall\alpha\in(0,\tfrac{1}{2})
\end{align}
provided the initial datum is smooth and $\Phi$ satisfies appropriate assumptions (here, the order $k\in\mathbb N$ is mainly determined by the smoothness of $\Phi$).

There is also a growing literature on the numerical approximation of linear SPDEs, see e.g. \cite{LPS,Ya1,Ya2}. 
One possible way is an implicit Euler scheme. 
Given a finite dimensional subspace $V_{h,0}\subset W^{1,2}_0(\mathcal{O})$ and an initial datum $\bfu_{h,0}\in V_{h,0}$ one computes
$\bfu_{h,m}$ such that
\begin{align}\label{tdiscr0}
\begin{aligned}
\int_{\mathcal{O}}&\bfu_{h,m}\cdot\bfvarphi \dx +\tau\int_{\mathcal O}\nabla\bfu_{h,m}:\nabla\bfphi\dx\\
&\qquad=\int_{\mathcal{O}}\bfu_{h,m-1}\cdot\bfvarphi \dx+\int_{\mathcal{O}}\Phi(\bfu_{h,m-1})\,\Delta_m W\cdot \bfvarphi\dx,\quad m=1,\dots, M,
\end{aligned}
\end{align}
for every $\bfphi\in V_{h,0}$, where $\Delta_m W=W(t_m)-W(t_{m-1})$.
(Note that in the case of \eqref{eq:}, \eqref{eq:S} with $p\neq2$ the corresponding scheme replacing  \eqref{tdiscr0} requires the solution of a nonlinear problem and is called a ``semi-algorithm''.
One must specify how the nonlinear problem is solved to obtain a concrete algorithms, see Section \ref{sec:num}.)

Based on the regularity \eqref{eq:1602} with some $k\in\mathbb{N}$ one can show that the approximation via \eqref{tdiscr0}
is of order one with respect to the space-discretization and of order $\alpha$ (for all $\alpha<\frac{1}{2}$) with respect to the time-discretization. The appropriate metric to quantify the error in is
\begin{align*}
\E\bigg[\max_{1\leq m\leq M}\int_{\mathcal{O}}|\bfu(t_m)-\bfu_{h,m}|^2\dx+\tau\sum_{m=1}^M\int_{\mathcal{O}}|\nabla\bfu(t_m)-\nabla\bfu_{h,m}|^2\dx\bigg].
\end{align*}

\subsection{The nonlinear stochastic problem}
\label{subsec:1.2}
For nonlinear stochastic problems like \eqref{eq:} with $p\neq2$, there is a bulk of literature regarding the existence of analytically weak solutions. The popular variational approach by Pardoux \cite{Pa} provides an existence theory for a quite general class of stochastic evolutionary equations. Existence of (unique) analytically strong solutions to generalized $p$-Laplace stochastic PDEs with $p\geq 2$ has been proved in \cite{G}.
Regularity results for \eqref{eq:} with $1<p<\infty$  have been shown in \cite{Br3}. In particular,  the first author proves that the unique weak solution satisfies\footnote{Here $L^2_{w^*}(\Omega;L^\infty(0,T;L^2(\mathcal{O})))$ is the space of weakly$^*$-measurable mappings $h:\Omega\to L^\infty(0,T;L^2(\mathcal{O}))$ such that $\E\esssup_{0\leq t\leq T}\|h\|^2_{L^2(\mathcal{O})}<\infty.$}
\begin{align}\label{eq:regx}
\nabla\bfu\in L^2_{w^*}(\Omega;L^\infty(0,T;L^2(\mathcal{O}))),\quad\bfF(\nabla\bfu)\in L^2(\Omega;L^2(0,T;W^{1,2}(\mathcal{O}))),
\end{align}
at least locally in space
and hence $\bfu$ is a strong solution (in the analytical sense, see Definition~\ref{def:strong}). 

H\"older-regularity in time of $\bfu$ (with values in some $L^p$-space) can be shown directly from the equation once having established the existence of second derivatives. For linear SPDEs, a boot-strap argument yields the same for all spatial derivatives of the solution and so \eqref{eq:1602} holds. Both arguments do not apply for $\bfF(\nabla\bfu)$ if $p\neq2$. 
In fact, the best one can hope for is
\begin{align}\label{eq:regt}
\bfu\in L^2(\Omega;C^{\alpha}([0,T];L^2(\mathcal{O}))),\quad \bfF(\nabla\bfu)\in L^2(\Omega;W^{\alpha,2}(0,T;L^2(\mathcal{O}))),
\end{align}
for all $\alpha\in(0,\tfrac{1}{2})$, cf. \cite{BrHo}. 

The time-regularity \eqref{eq:regt} is obviously much lower than \eqref{eq:regt1} in the deterministic case, due to the roughness of the driving Wiener process. It is also much lower than \eqref{eq:1602} in the linear stochastic case, due to the limited space-regularity in the nonlinear setting. In fact, the mapping $t\mapsto\bfF(\nabla\bfu(t))$ is not expected to be continuous in time (with values in $L^2(\mathcal O)$).
Hence, the natural error quantity
\begin{align*}
\E\bigg[\max_{1\leq m\leq M}\int_{\mathcal{O}}|\bfu(t_m)-\bfu_{h,m}|^2\dx+\tau\sum_{m=1}^M\int_{\mathcal{O}}|\bfF(\nabla\bfu(t_m))-\bfF(\nabla\bfu_{h,m})|^2\dx\bigg],
\end{align*}
measuring the distance between the solution $\bfu$ and a finite-element based space-time approximation $\bfu_{h,m}$ solving a nonlinear version of \eqref{tdiscr0}, is not well-defined. The same problem appears in the deterministic case if the time
regularity of $\bfF(\nabla\bfu)$ is too low as a consequence of irregular data, cf. \cite{BrMe}.

In order to overcome this problem we introduce a random time-grid. To be precise, we consider random time-points $\mathfrak t_m$ which are  distributed uniformly in $[m\tau-\tau/4,m\tau+\tau/4]$, where $\tau=T/M$. These time points are defined on a probability space $(\hat\Omega,\hat\F,\hat\p)$ which is possibly different from $(\Omega,\F,\p)$, the space where the Brownian motion in \eqref{eq:} is defined. We obtain a corresponding error estimator by taking the expectation $\hat\E$ with respect to $(\hat\Omega,\hat\F,\hat\p)$.
Based on the regularity in \eqref{eq:regx} and \eqref{eq:regt} 
for some $\alpha\in(0,\tfrac{1}{2})$,
 we prove that
\begin{align}
\nonumber
\hat\E\otimes\E\bigg[\max_{1\leq m\leq M}\int_{\mathcal{O}}|\bfu(\mathfrak t_m)-\bfu_{h,m}|^2\dx
&+\sum_{m=1}^M\tau_{m}\int_{\mathcal{O}}|\bfF(\nabla\bfu(\mathfrak t_m))-\bfF(\nabla\bfu_{h,m})|^2\dx\bigg]\\&\leq\,c(h^2+\tau^{2\alpha}),\label{error:right}
\end{align}
where $\tau_{m}=\mathfrak{t}_{m}-\mathfrak{t}_{m-1}$, see Theorem \ref{thm:4}. In other words, we understand our scheme as a random variable on a product space $\hat\Omega\times\Omega$, where $\hat\omega\in\hat\Omega$ accounts for the randomness introduced through the random time-grid, whereas $\omega\in \Omega $ is the randomness coming from the Brownian motion in \eqref{eq:}. 
As a matter of fact, the convergence rates from the linear problem still hold: we have convergence order one in space and  $\alpha$ in time. This theoretical result is confirmed by numerical experiments in Section \ref{sec:num}. 

\subsection{Comparison with Gy\"ongy-Millet}
As space-time discretizations of evolutionary SPDEs with monotone coefficients were already studied in \cite{GM1,GM2}, let us explain what are the main differences with the result that we put forward in the present paper. The class of equations considered in \cite{GM1,GM2} is more general and contains \eqref{eq:} as a special case, at least if $p>\frac{2d}{d+2}$. Indeed, the approach in \cite{GM1,GM2} is based on a Gelfand triple $V\hookrightarrow H\hookrightarrow V^\ast$, which in our case of \eqref{eq:} corresponds to $H=L^2(\mathcal{O})$ and $V=W^{1,p}_0(\mathcal{O})$ as the main part of the equation given by $\Div\bfS(\nabla\bfu)$ takes values in $V^\ast$. Clearly, the situation $p<\tfrac{2d}{d+2}$ cannot be included as the embedding $W^{1,p}_0(\mathcal{O})\hookrightarrow L^2(\mathcal{O})$ fails, nevertheless, this drawback does not occur
in our approach.

The result of \cite{GM2} are convergence rates, similar to \eqref{error:right}, where the error is measured in discrete versions of the spaces
$L^\infty(0,T;H)$ and $L^2(0,T;V)$. 
In order to establish these rates, the authors suppose additional assumptions upon the regularity of the solution. These assumptions are rather restrictive and can in general not be expected from a solution to \eqref{eq:}. We will outline this in more detail now.
As far as the spatial regularity is concerned, it is assumed in \cite{GM2} that
\begin{align}\label{eq:Gy}
\sup_{0\leq t\leq T}\stred\|\bfu\|^2_{\mathcal H}+\stred\int_0^T\|\bfu(t)\|_{\mathcal V}^2\,\dif t\leq C
\end{align}
for some separable Hilbert spaces $\mathcal V$ and $\mathcal H$ with $\mathcal V\hookrightarrow\mathcal H\hookrightarrow V$.

For \eqref{eq:} with $p=2$ an obvious choice is given by $\mathcal V=W^{1,2}_0\cap W^{2,2}(\mathcal O)$ and $\mathcal H=W^{1,2}_0(\mathcal O)$ (this can still include fully nonlinear problems, which satisfy \eqref{eq:Sp} with $p=2$).
Note, that the spatial convergence rate in \cite{GM2} depends on the ``gap'' between $V$ and $\mathcal V$ and $H$ and $\mathcal H$ and this example leads to rate 1.

In the non-degenerate case $\kappa>0$, if $p>2$, one can
infer from \eqref{eq:regx} that \eqref{eq:Gy} holds with the choice $\mathcal V=W^{1,p}_0\cap W^{2,p}(\mathcal O)$ and $\mathcal H=W^{1,2}_0(\mathcal O)$. This is technically excluded in \cite{GM2}
since $\mathcal{V}$ is not a Hilbert space and $\mathcal H$ does not embed into $V=W^{1,p}_0(\mathcal O)$ any more (there might be some hope to remove these assumptions in \cite{GM2}).

If $p<2$, estimate \eqref{eq:Gy} only follows from
\eqref{eq:regx} if $\nabla\bfu$ is bounded. While such a regularity holds in the deterministic case under certain assumptions, cf. \cite{DiFr}, it can never be true uniformly in $
\omega$ in the stochastic situation due to the unboundedness of the driving Wiener process.

In the degenerate case $\kappa=0$, if $p>2$, it is not possible to infer any integrability of $\nabla^2\bfu$ from \eqref{eq:regx}, whereas one has
$\E\int_0^T\|\nabla^2\bfu\|^p_{L^p(\mathcal O)}\dt<\infty$
in the case $p<2$.
 Comparing this with \eqref{eq:Gy} it turns out that the degenerate case is completely excluded in \cite{GM2}.
 
Finally, the assumption on the time-regularity of the solution made in \cite{GM2} is comparable to \eqref{eq:1602} with $k=1$ which is much stronger than
\eqref{eq:regt} and can in general not be expected as already explained in Section \ref{subsec:1.2}.
Recall also that we use a random time-grid because of the lack of continuity of $\nabla \bfu$ in time.

Our proof of the convergence rate is not based on the assumption \eqref{eq:Gy}, which permits us to overcome the drawback explained above and to deal with general systems with $p$-growth. The method relies rather on space and time regularity of $\bfF(\nabla\bfu)$ which is the natural quantity, the regularity of which can be studied via energy methods. 
Let us finally mention that our convergence rate is the same as in \cite{GM2}. On the other hand, numerical experiments are not given in \cite{GM2}.

\subsection{Further bibliographical comments}
In the papers \cite{BD,BCP,CP} space-time discretizations of stochastic Navier--Stokes equations are considered. The paper \cite{BCP} contains a convergence analysis whereas in \cite{BD} and \cite{CP} convergence rates -- similar to our results -- were shown for the two-dimensional space-periodic problem. It would be of great interest to combine this with the results in our paper and to study numerical approximations of generalized Newtonian fluids, done in the deterministic case in \cite{BDR}.

For completeness, we note that randomized numerical schemes in various settings have already appeared in the literature,  in particular in the context of ordinary differential equations, evolution equations with time-irregular coefficients, uncertainty quantification or modulated Schr\"odinger equations \cite{Assyr, EKKL,Scheichl,HKS20, JN09, KW2, KWa, KW}.

\subsection{Outline}
Our paper is organized as follows. In Section 2, we provide the necessary mathematical background. Section 3 gives a semi-algorithm for solving \eqref{eq:} along with our main result, Theorem \ref{thm:4}, which is an error estimate for this semi-algorithm. Section 4 describes how our semi-algorithm can be implemented on a real computer by way of convex optimization. Our numerical experiments confirm the sharpness of Theorem \ref{thm:4}.

\section{Mathematical framework}
\label{sec:framework}

We now give the precise assumptions on the system \eqref{eq:}. In particular, we specify the assumptions concerning the nonlinear tensor $\bfS$ in \eqref{eq:S}, introduce function spaces and the framework for stochastic integration, give a precise definition of the solution to \eqref{eq:} with its properties and present the finite element set-up.

\subsection{The nonlinear tensor}

We assume that $\bfS:\R^{D\times d}\rightarrow \R^{D\times d}$ is of class $C^0(\R^{D\times d})\cap C^1(\R^{D\times d}\setminus{\{0\}})$ and satisfies
\begin{align}\label{eq:Sp}
\lambda (\kappa+|\bfxi|)^{p-2}|\bfzeta|^2\leq \totdif\bfS(\bfxi)(\bfzeta,\bfzeta)\leq \Lambda(\kappa+|\bfxi|)^{p-2} |\bfzeta|^2
\end{align}
for all $\bfxi,\bfzeta\in\R^{D\times d}$ with some positive constants $\lambda,\Lambda$, some $\kappa\geq0$ and $p\in(1,\infty)$.
It is well known from the deterministic setting (and was already discussed in \cite{Br3} for the stochastic case) that an important role for the system \eqref{eq:S} with $\bfS$ satisfying \eqref{eq:Sp} is played by the function
$$\bfF(\bfxi)=(\kappa+|\bfxi|)^{\frac{p-2}{2}}\bfxi, \quad\bfxi\in\R^{D\times d}.$$
By $|\bfxi|$ we denote the Euclidean norm of a matrix $\bfxi\in\R^{D\times d}$ arising from the inner product $\bfxi:\bfzeta=\sum_{i=1}^D\sum_{j=1}^d\xi_{ij}\zeta_{ij}$.
The following two properties are essential for our analysis (see \cite{DieE08} and \cite{DiRu0} for a proof).
\begin{lemma}\label{lems:hammer}
Let $\bfS$ satisfy assumption \eqref{eq:Sp} for some $p\in(1,\infty)$.
\begin{enumerate}
\item For all $\bfxi,\bfeta\in \R^{D\times d}$ we have
\begin{align*}
|\bfF(\bfxi)-\bfF(\bfeta)|^2\sim \big(\bfS(\bfxi)-\bfS(\bfeta)\big):(\bfxi-\bfeta),
\end{align*}
where the constants hidden in $\sim$ only depend on $p$.\footnote{We write $f\sim g$ provided $f\leq cg\leq C f$ for some constants $c,\,C>0$.}
\item For every $\varepsilon>0$ and all $\bfxi,\bfeta,\bfzeta\in \R^{D\times d}$ it holds
\begin{align*}
\big|\big(\bfS(\bfxi)-\bfS(\bfeta)\big):(\bfxi-\bfzeta)\big|&\leq\,c(p,\varepsilon)\big(\bfS(\bfxi)-\bfS(\bfeta)\big):(\bfxi-\bfeta)\\&+\varepsilon\big(\bfS(\bfxi)-\bfS(\bfzeta)\big):(\bfxi-\bfzeta).\\
\end{align*}
\end{enumerate}
\end{lemma}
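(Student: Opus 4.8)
The plan is to reduce both assertions to the by-now-standard shifted $N$-function calculus of \cite{DieE08,DiRu0}. Set $\varphi(t):=\int_0^t(\kappa+s)^{p-2}s\,\dd s$, so that $\varphi(t)\sim(\kappa+t)^{p-2}t^2$ and $\varphi'(t)\sim(\kappa+t)^{p-2}t$, and for $a\ge0$ let $\varphi_a$ denote its shift, with $\varphi_a(t)\sim(\kappa+a+t)^{p-2}t^2$ and $\varphi_a'(t)\sim(\kappa+a+t)^{p-2}t$; the implied constants are uniform in $a$. I will repeatedly use the elementary one-variable lemma (cf. \cite{DieE08}) that for every $s>-1$ one has $\int_0^1(\kappa+|\bfeta+\theta(\bfxi-\bfeta)|)^s\,\dd\theta\sim(\kappa+|\bfxi|+|\bfeta|)^s$ uniformly in $\bfxi,\bfeta$. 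Applying the fundamental theorem of calculus to $\bfS$ along the segment $[\bfeta,\bfxi]$ and invoking \eqref{eq:Sp} gives
\[
\big(\bfS(\bfxi)-\bfS(\bfeta)\big):(\bfxi-\bfeta)=\int_0^1\totdif\bfS(\bfeta+\theta(\bfxi-\bfeta))(\bfxi-\bfeta,\bfxi-\bfeta)\,\dd\theta\sim(\kappa+|\bfxi|+|\bfeta|)^{p-2}|\bfxi-\bfeta|^2\sim\varphi_{|\bfxi|}(|\bfxi-\bfeta|),
\]
where the integrand is singular in $\theta$ at one point at most and $p-2>-1$, so the integral is finite; the case $\kappa=0$ and the behaviour near the origin are handled by a routine continuity/density argument using $\bfS\in C^0(\R^{D\times d})$. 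Computing $\totdif\bfF$ explicitly and checking $\totdif\bfF(\bfz)(\bfh,\bfh)\sim(\kappa+|\bfz|)^{(p-2)/2}|\bfh|^2$ and $|\totdif\bfF(\bfz)|\lesssim(\kappa+|\bfz|)^{(p-2)/2}$, the same argument applied to $\bfF$ yields $|\bfF(\bfxi)-\bfF(\bfeta)|^2\sim(\kappa+|\bfxi|+|\bfeta|)^{p-2}|\bfxi-\bfeta|^2$. Combining the two equivalences gives part~(a).

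For part~(b) I would start from the upper bound furnished by the same FTC representation, $|\bfS(\bfxi)-\bfS(\bfeta)|\le\Lambda\int_0^1(\kappa+|\bfeta+\theta(\bfxi-\bfeta)|)^{p-2}\,\dd\theta\,|\bfxi-\bfeta|\lesssim(\kappa+|\bfxi|+|\bfeta|)^{p-2}|\bfxi-\bfeta|\sim\varphi_{|\bfxi|}'(|\bfxi-\bfeta|)$, using $\kappa+|\bfxi|+|\bfeta|\sim\kappa+|\bfxi|+|\bfxi-\bfeta|$. By Cauchy--Schwarz,
\[
\big|\big(\bfS(\bfxi)-\bfS(\bfeta)\big):(\bfxi-\bfzeta)\big|\lesssim\varphi_{|\bfxi|}'(|\bfxi-\bfeta|)\,|\bfxi-\bfzeta|.
\]
Now I apply the $\varepsilon$-Young inequality for the shifted pair $(\varphi_{|\bfxi|},(\varphi_{|\bfxi|})^\ast)$, namely $\varphi_a'(s)\,t\le\varepsilon\,\varphi_a(s)+c(\varepsilon,p)\,\varphi_a(t)$ with constant uniform in $a$ (this follows from the uniform $\Delta_2$/$\nabla_2$ bounds of the shifted families, see \cite{DieE08,DiRu0}), with $s=|\bfxi-\bfeta|$, $t=|\bfxi-\bfzeta|$. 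The first term $\varepsilon\,\varphi_{|\bfxi|}(|\bfxi-\bfeta|)$ is $\sim\varepsilon\,(\bfS(\bfxi)-\bfS(\bfeta)):(\bfxi-\bfeta)$ by part~(a); for the second I use the shift-change $\varphi_{|\bfxi|}(|\bfxi-\bfzeta|)\sim\varphi_{|\bfzeta|}(|\bfxi-\bfzeta|)$, which holds because $\big||\bfxi|-|\bfzeta|\big|\le|\bfxi-\bfzeta|$ forces $\kappa+|\bfxi|+|\bfxi-\bfzeta|\sim\kappa+|\bfzeta|+|\bfxi-\bfzeta|$, and then part~(a) once more to identify $\varphi_{|\bfzeta|}(|\bfxi-\bfzeta|)\sim(\bfS(\bfxi)-\bfS(\bfzeta)):(\bfxi-\bfzeta)$. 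Reabsorbing the fixed multiplicative constants into $\varepsilon$ and $c(\varepsilon,p)$ yields the claim.

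The hard part is the Young step. A naive estimate — Cauchy--Schwarz followed by the symmetric split $(\kappa+|\bfxi|+|\bfeta|)^{p-2}|\bfxi-\bfeta|\,|\bfxi-\bfzeta|=\big[(\kappa+|\bfxi|+|\bfeta|)^{\frac{p-2}{2}}|\bfxi-\bfeta|\big]\big[(\kappa+|\bfxi|+|\bfeta|)^{\frac{p-2}{2}}|\bfxi-\bfzeta|\big]$ and $2ab\le\varepsilon a^2+\varepsilon^{-1}b^2$ — produces the second term with the \emph{wrong} weight $(\kappa+|\bfxi|+|\bfeta|)^{p-2}$ in place of $(\kappa+|\bfxi|+|\bfzeta|)^{p-2}$. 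When $|\bfeta|$ and $|\bfzeta|$ live on very different scales (for instance $1<p<2$, $\kappa=0$, $\bfeta$ close to $\bfxi$ and $|\bfzeta|$ large) these weights are not comparable and the estimate breaks down; one genuinely has to exploit that $|\bfS(\bfxi)-\bfS(\bfeta)|$ is itself small when $\bfxi\approx\bfeta$, which is exactly what keeping the quantity in the form $\varphi_{|\bfxi|}'(|\bfxi-\bfeta|)$ and using the (unbalanced) shifted Young inequality achieves. The only other point requiring a little care is the non-differentiability of $\bfS$ and $\bfF$ at the origin when $\kappa=0$, but this is harmless since the integrands along segments remain integrable for $p>1$ and one concludes by approximation using $\bfS\in C^0$.
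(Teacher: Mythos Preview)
The paper does not give its own proof of this lemma; it simply cites \cite{DieE08} and \cite{DiRu0}. Your proposal is a correct and faithful outline of precisely the shifted $N$-function argument developed in those references, so in that sense you are reproducing the intended proof rather than offering an alternative. One small notational slip: $\bfF$ is vector-valued, so writing $\totdif\bfF(\bfz)(\bfh,\bfh)$ as a scalar is not quite right; what you need for the lower bound in~(a) is the pointwise equivalence $|\totdif\bfF(\bfz)\bfh|\sim(\kappa+|\bfz|)^{(p-2)/2}|\bfh|$ together with the observation that the derivative is a rank-one perturbation of a multiple of the identity, so the integral along the segment inherits the lower bound (this is exactly the computation in \cite{DieE08}). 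Otherwise the argument, including your discussion of why the naive weighted Cauchy--Schwarz fails and the shifted Young inequality is genuinely needed, is on target.
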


\subsection{Function spaces}
We denote as usual by $L^p(\mathcal O)$ and $W^{1,p}(\mathcal O)$ for $p\in[1,\infty]$ Lebesgue and Sobolev spaces over an open set $\mathcal O\subset\R^d$. We denote by $W^{1,p}_0(\mathcal O)$ the closure of the smooth and compactly supported functions in $W^{1,p}(\mathcal O)$. We do not distinguish in the notation for the function spaces between scalar- and vector-valued functions. However, vector-valued functions will usually be denoted in bold case.

For a separable Banach space $(X,\|\cdot\|_X)$ we denote by $L^p(0,T;X)$ the set of (Bochner-) measurable functions $u:(0,T)\rightarrow X$ such that the mapping $t\mapsto \|u(t)\|_{X}\in L^p(0,T)$. 
The set $C([0,T];X)$ denotes the space of functions $u:[0,T]\rightarrow X$ which are continuous with respect to the norm topology on $(X,\|\cdot\|_X)$. For $\alpha\in(0,1]$ we write
$C^\alpha([0,T];X)$ for the space of H\"older continuous functions.
Finally, we denote by $W^{\alpha,p}(0,T;X)$ the fractional Sobolev space in time, i.e. the subspace of the Bochner space $L^{p}(0,T;X)$
consisting of the functions having finite $W^{\alpha,p}(0,T;X)$-norm.

\subsection{The probability framework}
\label{subsec:prob}

Let $(\Omega,\F,(\F_t)_{t\geq0},\prst)$ be a stochastic basis with a complete, right-continuous filtration. The process $W$ is a cylindrical Wiener process, that is, $W(t)=\sum_{k\geq1}\beta_k(t) e_k$ with $(\beta_k)_{k\geq1}$ being mutually independent real-valued standard Wiener processes relative to $(\F_t)_{t\geq0}$ and $(e_k)_{k\geq1}$ a complete orthonormal system in a sepa\-rable Hilbert space $\mathfrak{U}$.
To give the precise definition of the diffusion coefficient $\varPhi$, consider $\bfz\in L^2(\mathcal O)$ and let $\,\varPhi(\bfz):\mathfrak{U}\rightarrow L^2(\mathcal{O})$ be defined by $\Phi(\bfz)e_k=\bfg_k(\cdot,\bfz(\cdot))$. In particular, we suppose
that $\bfg_k\in C^1(\mathcal{O}\times\R^D)$ and the conditions
\begin{align}\label{eq:phi}
\sum_{k\geq1}|\bfg_k(x,\bfxi)|^2 \leq c(1+|\bfxi|^2)&,\qquad
\sum_{k\geq1}|\nabla_{\bfxi} \bfg_k(x,\bfxi)|^2\leq c,\\
\sum_{k\geq1}|\nabla_x \bfg_k(x,\bfxi)|^2 &\leq c(1+|\bfxi|^2),\label{eq:phi2}
\end{align}
hold for all $x\in \mathcal{O}$ and $\bfxi\in\R^D$.

The conditions imposed on $\varPhi$, particularly the first assumption from \eqref{eq:phi}, imply that
$\varPhi:L^2(\mathcal{O})\rightarrow L_2(\mathfrak{U};L^2(\mathcal{O})),$
where $L_2(\mathfrak{U};L^2(\mathcal{O}))$ denotes the collection of Hilbert-Schmidt operators from $\mathfrak{U}$ to $L^2(\mathcal{O})$. Thus, given a progressively measurable process $\bfu\in L^2(\Omega;L^2(0,T;L^2(\mathcal{O})))$, the stochastic integral in \eqref{eq:}
is a well-defined process taking values in $L^2(\mathcal{O})$ (see \cite{PrZa} for a detailed construction).
Moreover, we can multiply by test functions to obtain
 \begin{align*}
\bigg\langle\int_0^t \varPhi(\bfu)\,\dd W,\bfphi\bigg\rangle=\sum_{k\geq 1} \int_0^t\langle \bfg_k(\bfu),\bfphi\rangle\,\dd\beta_k,\quad \bfphi\in L^2(\mathcal O).
\end{align*}

The initial datum may be random in general, i.e. $\F_0$-measurable, and we assume at least $\bfu_0\in L^2(\Omega;L^2(\mathcal{O}))$.

\subsection{The concept of solution and preliminary results}
\label{subsec:solution}

In this subsection we recall the definition of weak and strong solution as well as the basic existence, uniqueness and regularity results established in \cite{Br3}.

\begin{definition}[Weak solution]
\label{def:weakp}
An $(\F_t)$-progressively measurable function
$$\bfu\in L^2(\Omega;C([0,T];L^2(\mathcal{O})))\cap L^p(\Omega; L^p(0,T;{W}_0^{1,p}(\mathcal{O})))$$
is called a weak solution to \eqref{eq:} with $\bfS$ satisfying \eqref{eq:Sp} if
for every $\bfphi\in C^\infty_c(\mathcal{O})$ and all $t\in[0,T]$ it holds true $\p$-a.s.
\begin{align*}
\int_{\mathcal{O}}\bfu(t)\cdot\bfvarphi\dx &+\int_0^t\int_{\mathcal{O}}\bfS(\nabla \bfu(\sigma)):\nabla\bfphi\dxs\\&=\int_{\mathcal{O}}\bfu_0\cdot\bfvarphi\dx+\int_{\mathcal{O}}\int_0^t\Phi(\bfu)\,\dd W\cdot \bfvarphi\dx.
\end{align*}
\end{definition}

\begin{definition}[Strong solution]
\label{def:strong}
A weak solution to \eqref{eq:} with $\bfS$ satisfying \eqref{eq:Sp} is called a strong solution provided
$$\Div\bfS(\nabla \bfu)\in L^1(\Omega;L^1_{\text{loc}}((0,T)\times\mathcal O))$$
and we have for all $t\in[0,T]$ $\mathbb{P}$-a.s.
\begin{align*}
\bfu(t)&=\bfu_0+\int_0^t\Div\,\bfS(\nabla \bfu)\ds+\int_0^t\Phi(\bfu)\,\dd W.
\end{align*}
\end{definition}

The following result, which is taken from \cite[Theorem 4, Theorem 5]{Br3}, is the starting point of our analysis. We remark that the case $\kappa=0$ is not included in \cite{Br3} but can be obtained by approximation.

\begin{theorem}\label{thm:prelim}
Assume $\bfu_0\in L^2(\Omega;W_0^{1,2}(\mathcal{O}))$ is an $\mathfrak F_0$-measurable random variable. Suppose further that \eqref{eq:Sp}, \eqref{eq:phi} and \eqref{eq:phi2} hold.
Then there is a unique strong solution $\bfu$ to \eqref{eq:} which satisfies
\begin{align*}
\E\bigg[\sup_{t\in(0,T)}\int_{\mathcal{O}'}|\nabla\bfu(t)|^2\dx+\int_0^T\int_{\mathcal{O}'}|\nabla\bfF(\nabla\bfu)|^2\dxt\bigg]<\infty
\end{align*}
for all $\mathcal{O}'\Subset \mathcal{O}$.
\end{theorem}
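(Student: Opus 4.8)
The plan is to establish the $L^2$-in-space bound on $\nabla\bfu$ together with the $L^2$-in-space-time bound on $\nabla\bfF(\nabla\bfu)$ by the standard energy method, applied at the level of finite differences in space rather than directly to derivatives, so that no a priori knowledge of second derivatives is needed. First I would fix a subdomain $\mathcal O'\Subset\mathcal O$ and a cutoff $\eta\in C_c^\infty(\mathcal O)$ with $\eta\equiv1$ on $\mathcal O'$. For a direction $e_i$ and small $h$, let $\Delta_h^i v(x)=v(x+he_i)-v(x)$ denote the difference quotient operator. Formally one wants to test the equation with $-\Delta_{-h}^i\big(\eta^2\Delta_h^i\bfu\big)$; since $\bfu$ only solves the equation in the weak (distributional-in-time, It\^o-in-$\omega$) sense, this has to be done via the It\^o formula for the square of the $L^2$-norm applied to the process $\eta\,\Delta_h^i\bfu$. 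This requires a regularization in space (e.g. mollification, using that $\eta$ is compactly supported so boundary effects do not enter) to make the It\^o formula rigorous, followed by passing to the limit; this is the kind of routine-but-delicate approximation step that \cite{Br3} carries out, and I would simply invoke it.

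The core computation, after applying It\^o's formula to $\tfrac12\|\eta\,\Delta_h^i\bfu(t)\|_{L^2}^2$, produces: (i) the deterministic ``good term'' $\int_0^t\int_{\mathcal O}\eta^2\,\Delta_h^i\bfS(\nabla\bfu):\Delta_h^i\nabla\bfu\,\dx\,\dd\sigma$, which by monotonicity of $\bfS$ and Lemma~\ref{lems:hammer}(a) controls $\int_0^t\int_{\mathcal O}\eta^2\,|\Delta_h^i\bfF(\nabla\bfu)|^2\,\dx\,\dd\sigma$ up to constants; (ii) a commutator term coming from $\Delta_h^i(\eta^2\nabla\bfu)$ versus $\eta^2\Delta_h^i\nabla\bfu$, i.e. terms where the difference quotient falls on $\eta$, which one estimates by Young's inequality, absorbing an $\varepsilon$-fraction of the good term using Lemma~\ref{lems:hammer}(b) and leaving a term controlled by $\int|\nabla\bfu|^2$ and $\int|\bfF(\nabla\bfu)|^2$ over the slightly larger support of $\eta$; (iii) the It\^o correction $\tfrac12\int_0^t\|\eta\,\Delta_h^i\Phi(\bfu)\|_{L_2(\mathfrak U;L^2)}^2\,\dd\sigma$, which by the gradient bounds \eqref{eq:phi}--\eqref{eq:phi2} on $\bfg_k$ is bounded by $\int_0^t\int(1+|\Delta_h^i\bfu|^2+|\nabla\bfu|^2)$; here $|\Delta_h^i\bfu(x)|\lesssim h\,\||\nabla\bfu|\|$ in an $L^2$ sense, so this is harmless; and (iv) the stochastic integral term, which is a martingale that one handles by the Burkholder--Davis--Gundy inequality after taking the supremum in $t$ and expectation. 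Taking $\E\sup_{t\le T}$ and using BDG plus Young to absorb, one arrives at a bound
\begin{align*}
\E\bigg[\sup_{t\le T}\int_{\mathcal O}\eta^2|\Delta_h^i\bfu(t)|^2\dx+\int_0^T\int_{\mathcal O}\eta^2|\Delta_h^i\bfF(\nabla\bfu)|^2\dxt\bigg]\le c\,h^2\,\big(1+\E\|\nabla\bfu\|_{L^2((0,T)\times\mathcal O)}^2+\E\|\bfF(\nabla\bfu)\|_{L^2((0,T)\times\mathcal O)}^2\big),
\end{align*}
where the right-hand side is finite by the basic energy estimate for weak solutions (which gives $\nabla\bfu\in L^p(\Omega;L^p(0,T;L^p))$ and, via Lemma~\ref{lems:hammer}(a) tested against $\bfu$ itself, $\bfF(\nabla\bfu)\in L^2$ over all of $(0,T)\times\mathcal O$).

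Dividing by $h^2$, summing over $i=1,\dots,d$, and letting $h\to0$, the uniform bound on difference quotients upgrades to $\eta\,\nabla\bfu\in L^2_{w^*}(\Omega;L^\infty(0,T;L^2))$ and $\eta\,\nabla\bfF(\nabla\bfu)\in L^2(\Omega;L^2(0,T;L^2))$ by the standard characterization of Sobolev functions via bounded difference quotients (lower semicontinuity of the norms under weak-$*$ and weak convergence). Since $\eta\equiv1$ on $\mathcal O'$, this is exactly the asserted estimate. Once $\Div\bfS(\nabla\bfu)\in L^1_{\mathrm{loc}}$ is known — which follows because $\bfS(\nabla\bfu)\in L^{p'}$ and now has a weak divergence, indeed $|\Div\bfS(\nabla\bfu)|\lesssim(\kappa+|\nabla\bfu|)^{(p-2)/2}|\nabla\bfF(\nabla\bfu)|\in L^2_{\mathrm{loc}}$ by the chain rule and Cauchy--Schwarz — the weak formulation can be integrated by parts in space to yield the strong formulation of Definition~\ref{def:strong}, giving that $\bfu$ is a strong solution. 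The main obstacle is item (ii), the careful bookkeeping of the localization/commutator terms so that everything falling on $\eta$ is either absorbed into the good term via Lemma~\ref{lems:hammer}(b) or bounded by already-controlled global quantities; the secondary technical point is making the It\^o formula for $\|\eta\Delta_h^i\bfu\|_{L^2}^2$ rigorous, which I would dispatch by referring to the approximation scheme of \cite{Br3}.
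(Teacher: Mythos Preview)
The paper does not supply its own proof of this theorem: it is quoted verbatim as a preliminary result from \cite[Theorem~4, Theorem~5]{Br3}, with only the remark that the degenerate case $\kappa=0$ can be recovered by approximation. So there is nothing to compare at the level of the present paper.

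That said, your outline is essentially the argument carried out in \cite{Br3}: localize with a cutoff $\eta$, apply It\^o's formula to $\tfrac12\|\eta\,\Delta_h^i\bfu\|_{L^2}^2$, extract the monotone good term $\int\eta^2\,\Delta_h^i\bfS(\nabla\bfu):\Delta_h^i\nabla\bfu\sim\int\eta^2|\Delta_h^i\bfF(\nabla\bfu)|^2$ via Lemma~\ref{lems:hammer}(a), treat the commutators where the shift hits $\eta$ by Lemma~\ref{lems:hammer}(b), control the correction term using \eqref{eq:phi}--\eqref{eq:phi2}, handle the martingale by BDG, and pass $h\to0$ using the difference-quotient characterization of weak derivatives. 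One technical point worth tightening: your claim that $\int\eta^2|\Delta_h^i\bfu|^2\lesssim h^2\int|\nabla\bfu|^2$ ``in an $L^2$ sense'' is circular for $p<2$, since $\nabla\bfu\in L^2$ is part of what you are proving. The cleaner route, which is what \cite{Br3} does, is to leave $\int_0^t\int\eta^2|\Delta_h^i\bfu|^2$ on the right-hand side and absorb it into $\sup_{s\le t}\int\eta^2|\Delta_h^i\bfu|^2$ by Gronwall \emph{after} taking expectations; similarly the commutator terms are ultimately bounded by $\int|\bfF(\nabla\bfu)|^2$ (which is in $L^1$ by the basic energy estimate) rather than by $\int|\nabla\bfu|^2$. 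With that adjustment your sketch matches the cited proof.
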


It is expected that the result above holds even globally in space provided the boundary of $\mathcal O$ is smooth enough - {at least under appropriate compatibility conditions such as zero boundary values for the noise (see the counterexamples by Krylov \cite{Kr} on the global regularity for SPDEs in connection with this)}. A proof is, however, still missing.

\subsection{Discretization in space}
\label{sec:Vh}

Let $\mathcal{O} \subset \setR^d$ be a connected, open
domain with polyhedral boundary. We assume that $\partial \mathcal{O}$ is
Lipschitz continuous. For a bounded (non-empty) set $U \subset
\setR^d$ we denote
by $h_U$ the diameter of $U$, and by $\rho_U$ the supremum of the
diameters of inscribed balls.  
We denote by $\mathscr{T}_h$ a simplicial subdivision of $\mathcal{O}$ with
\begin{align*}
  h &= \max_{\mathcal S \in \mathscr{T}_h} h_\mathcal S
\end{align*}
which is
non-degenerate in the sense that there is $\gamma_0>0$ such that
\begin{align}
  \label{eq:nondeg}
  \max_{\mathcal S \in \mathscr{T}_h} \frac{h_\mathcal S}{\rho_\mathcal S} \leq \gamma_0.
\end{align}
For $\mathcal S \in \mathscr{T}_h$ we define the set of neighbours $N_\mathcal S$ and
the neighbourhood $\mathcal M_\mathcal S$ by
\begin{align*}
  N_\mathcal S &:= \set{\mathcal S' \in \mathscr{T}_h \,:\, \overline{\mathcal S'} \cap
    \overline{\mathcal S} \not= \emptyset},
  \quad
  \mathcal M_\mathcal S := \text{interior} \bigcup_{\mathcal S' \in N_\mathcal S} \overline{\mathcal S'}.
\end{align*}
Note that for all $\mathcal S,\mathcal S' \in \mathscr{T}_h$: $\mathcal S' \subset
\overline{\mathcal M_{\mathcal S}} \Leftrightarrow \mathcal S \subset \overline{\mathcal M_{\mathcal S'}}
\Leftrightarrow \overline{\mathcal S} \cap \overline{\mathcal S'} \not=\emptyset$. Due
to our assumption on $\mathcal{O}$ the neighbourhoods $\mathcal M_\mathcal S$ are connected, open bounded sets. 

It is easy to see that the non-degeneracy~\eqref{eq:nondeg} of
$\mathscr{T}_h$ implies the following properties, where the implicit constants
are independent of $h$:
\begin{enumerate}
\item \label{mesh:SK} $\abs{\mathcal M_\mathcal S} \sim \abs{\mathcal S}$ for all $\mathcal S \in
  \mathscr{T}_h$, where $|\cdot|$ denotes the Lebesgue measure of a set.
\item \label{mesh:NK} There exists $m_1 \in \setN$ such that $\# N_\mathcal S
  \leq m_1$ for all $\mathcal S \in
  \mathscr{T}_h$.
\end{enumerate}
  
For $\mathcal{O}\subset \setR ^d$ and $\ell\in \setN _0$ we denote by
$\mathscr{P}_\ell(\mathcal{O})$ the polynomials on $\mathcal{O}$ of degree less than or equal
to $\ell$. Moreover, we set $\mathscr{P}_{-1}(\mathcal{O}):=\set {0}$. Let us
characterize the finite element space $V_h$ by
\begin{align*}
  V_h &:= \set{\bfv \in W^{1,1}(\mathcal{O})\,:\, \bfv|_{\mathcal S}
    \in \mathscr{P}_1(\mathcal S)\,\,\forall \mathcal S\in \mathscr T_h}
\end{align*}
Furthermore, we define $V_{h,0}:=V_h\cap W^{1,1}_0(\mathcal O)$.
In order to interpolate between the discrete and continuous function spaces we work with the $L^2(\mathcal O)$-orthogonal projection onto $V_{h,0}$ which we denote by $\Pi_h$.
 Regarding stability/approximability we have
  \begin{align}
    \label{eq:stab'}
 \int_\mathcal O \Big|\frac{\bfv-\Pi_h \bfv}{h}\Big|^2\dx+ \int_\mathcal O \abs{\nabla \bfv-\nabla\Pi_h \bfv}^2\dx &\leq
    \,c \int_{\mathcal O} \abs{\nabla
      \bfv}^2\dx
  \end{align}
for all $v\in W^{1,2}_0(\mathcal O)$ uniformly in $h$. The estimate for the second term on the left-hand side is explictely states in \cite[equ. 4.12]{BDSW}. The
estimate for the first term follows since $\Pi_h\bfv$ belongs to a finite-dimensional function space (using also Poincar\'e's inequality and inverse estimates).
The following crucial estimate is shown in \cite[Theorem 4.5]{BDSW}.

\begin{lemma}
  \label{thm:app_V}
  Let $\mathcal{O} \subset \setR^d$ be a connected, open
domain with polyhedral boundary and $\mathscr{T}_h$ a simplicial subdivision of $\mathcal O$ which is non-degenerate. For all
  $\bfv \in W^{1,p}_0(\mathcal O)$ with $\bfF(\nabla \bfv)
  \in W^{1,2}(\mathcal O)$ we have
  \begin{align}
    \label{eq:app_V2}
    \int_{\mathcal O} \bigabs{\bfF (\nabla \bfv) - \bfF (\nabla \Pi_h
      \bfv)}^2 \dx &\leq c\, h^2\,
    \int_{\mathcal O} \bigabs{\nabla \bfF(\nabla \bfv)}^2\dx,
  \end{align}
  where $c$ depends only on $p$ and $\gamma_0$.
\end{lemma}

\section{Finite element based space-time approximation}

With the preparations from the previous section at hand, we are able to formulate our algorithm for the space-time approximation of \eqref{eq:}. We construct a random partition of $[0,T]$ with average mesh size $\tau=T/M$ as follows. Let $\mathfrak t_0=0$ and let $\mathfrak t_m$ for $m=1,\dots,M$ be independent random variables such that $\mathfrak t_m$ is distributed uniformly in $[m\tau-\tau/4,m\tau+\tau/4]$. We assume that the random variables $\mathfrak{t}_{m}$ are defined on a probability space $(\hat\Omega,\hat{\mathfrak{F}},\hat\p)$ and we denote by $\hat\E$ the corresponding expectation. Let $\tau_m=\mathfrak t_m-\mathfrak t_{m-1}$ and observe that $\tau_m$ is a random variable satisfying $\tau/2\leq \tau_m\leq 3\tau/2$.
Let $\bfu_{h,0}:=\Pi_h\bfu_0$ and for every $m\in\{1,\dots,M\}$ find  $\bfu_{h,m}\in L^2(\Omega;V_{h,0})$ such that
for every $\bfphi\in V_{h,0}$ it holds true $\p$-a.s.
\begin{align}\label{tdiscr}
\begin{aligned}
\int_{\mathcal{O}}&\bfu_{h,m}\cdot\bfvarphi \dx +\tau_m\int_{\mathcal{O}}\bfS(\nabla\bfu_{h,m}):\nabla\bfphi\dx\\
&\qquad=\int_{\mathcal{O}}\bfu_{h,m-1}\cdot\bfvarphi \dx+\int_{\mathcal{O}}\Phi(\bfu_{h,m-1})\,\Delta_mW\cdot \bfvarphi\dx,
\end{aligned}
\end{align}
where $\Delta_m W=W(\mathfrak t_m)-W(\mathfrak t_{m-1})$.

\subsection{Error analysis}
\label{sec:error}

In this subsection we establish convergence with rates of the above defined algorithm.
It is the main result of the paper. 
\begin{theorem}\label{thm:4}
Let $\bfu$ be the unique weak solution to \eqref{eq:} in the sense of Definition \ref{def:weakp}, where $\bfu_0\in L^2(\Omega,W^{1,2}_0(\mathcal O))$ is $\F_0$-measurable. Suppose that \eqref{eq:phi} holds.
Finally, assume that
\begin{align}\label{reg1}
\bfF(\nabla\bfu)&\in L^2(\Omega;L^2(0,T;W^{1,2}(\mathcal{O}))),\nabla\bfu\in L^2_{w^*}(\Omega;L^\infty(0,T;L^{2}(\mathcal{O}))),\\
\label{reg2}\bfF(\nabla\bfu)&\in L^2 (\Omega;W^{\alpha,2}(0,T;L^{2}(\mathcal{O}))),\ \bfu\in L^2(\Omega;C^{\alpha}([0,T];L^2(\mathcal O))),
\end{align}
where $\alpha\in(0,\tfrac{1}{2})$.
Then we have
\begin{align}
\nonumber
\hat\E\otimes\E\bigg[\max_{1\leq m\leq M}\int_{\mathcal{O}}|\bfu(\mathfrak t_m)-\bfu_{h,m}|^2\dx&+\sum_{m=1}^M \tau_m \int_{\mathcal{O}}|\bfF(\nabla\bfu(\mathfrak t_m))-\bfF(\nabla\bfu_{h,m})|^2\dx\bigg]\\&\leq \,c\,\big(h^2+\tau^{2\alpha}\big),
\label{eq:thm:4}
\end{align}
where $(\bfu_{h,m})_{m=1}^M$ is the numerical solution to \eqref{eq:} given by \eqref{tdiscr}.
\end{theorem}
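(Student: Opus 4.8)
The plan is to follow the now-standard energy-method strategy for $p$-Laplace-type discretizations (as in \cite{DER,DiRu}), but to carry all estimates on the \emph{random} grid $\{\mathfrak t_m\}$ so that the error quantity makes sense despite the poor time regularity \eqref{eq:regt}. I would first set up the error decomposition $\bfu(\mathfrak t_m)-\bfu_{h,m}=\big(\bfu(\mathfrak t_m)-\Pi_h\bfu(\mathfrak t_m)\big)+\big(\Pi_h\bfu(\mathfrak t_m)-\bfu_{h,m}\big)=:\bftheta_m+\bfe_m$, where the projection error $\bftheta_m$ is controlled spatially by \eqref{eq:stab'} and, crucially, the nonlinear term $\bfF(\nabla\bfu(\mathfrak t_m))-\bfF(\nabla\Pi_h\bfu(\mathfrak t_m))$ by Lemma~\ref{thm:app_V} (this is where the $h^2$ comes from, after summing over $\mathcal S\in\mathscr T_h$ and using \eqref{mesh:SK}, \eqref{mesh:NK}). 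The main work is the estimate for $\bfe_m$.

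For $\bfe_m$ I would test the continuous equation \eqref{eq:} between $\mathfrak t_{m-1}$ and $\mathfrak t_m$ with $\bfphi\in V_{h,0}$, subtract the scheme \eqref{tdiscr}, and insert $\Pi_h\bfu$; choosing $\bfphi=\bfe_m$ yields the discrete identity
\begin{align*}
\tfrac12\big(\|\bfe_m\|_2^2-\|\bfe_{m-1}\|_2^2+\|\bfe_m-\bfe_{m-1}\|_2^2\big)
+\tau_m\int_{\mathcal O}\big(\bfS(\nabla\Pi_h\bfu(\mathfrak t_m))-\bfS(\nabla\bfu_{h,m})\big):\nabla\bfe_m\dx
= \text{(error terms)}.
\end{align*}
The coercivity part, via Lemma~\ref{lems:hammer}(i), gives back $\tau_m\|\bfF(\nabla\Pi_h\bfu(\mathfrak t_m))-\bfF(\nabla\bfu_{h,m})\|_2^2$, which combined with the projection estimate produces the second term on the left of \eqref{eq:thm:4}. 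The error terms split into: (a) a deterministic-looking consistency term comparing $\bfS(\nabla\bfu(\mathfrak t_m))$ with $\bfS(\nabla\Pi_h\bfu(\mathfrak t_m))$, absorbed using Lemma~\ref{lems:hammer}(ii) with small $\varepsilon$ into the coercive term plus $c\,h^2$; (b) a time-increment term $\int_{\mathcal O}(\bfu(\mathfrak t_m)-\bfu(\mathfrak t_{m-1})-\int_{\mathfrak t_{m-1}}^{\mathfrak t_m}\Phi(\bfu)\dd W)\cdot\bfe_m\dx$ coming from the integrated deterministic part, i.e. $\int_{\mathfrak t_{m-1}}^{\mathfrak t_m}\Div\bfS(\nabla\bfu)\ds$ tested against $\bfe_m$; and (c) a stochastic term $\int_{\mathcal O}\int_{\mathfrak t_{m-1}}^{\mathfrak t_m}\big(\Phi(\bfu)-\Phi(\bfu_{h,m-1})\big)\dd W\cdot\bfe_m\dx$.

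Term (c) is handled as usual: after summing in $m$ and taking $\hat\E\otimes\E$, the martingale part vanishes in expectation up to a Burkholder–Davis–Gundy bound on the maximum, and the remainder is controlled by the Lipschitz bound \eqref{eq:phi} on $\Phi$, giving $c\tau\sum_m\E\|\bfe_{m-1}\|_2^2$ (plus lower-order data terms), which is absorbed by a discrete Gronwall argument. The genuinely new point — and the step I expect to be the main obstacle — is term (b): the factor $\bfu(\mathfrak t_m)-\bfu(\mathfrak t_{m-1})$ is only H\"older-$\alpha$ in time with $\alpha<\tfrac12$, so a naive bound would cost $\tau^{\alpha}$ per step, i.e. only $\tau^{\alpha-1}\to\infty$ after summing. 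This is precisely where the \emph{random} grid enters. Writing the deterministic increment as $\int_{\mathfrak t_{m-1}}^{\mathfrak t_m}\bfS(\nabla\bfu(\sigma))-\bfS(\nabla\bfu(\mathfrak t_m))\colon\nabla\bfe_m\,\dd\sigma$ (the "frozen-coefficient" trick), one uses Lemma~\ref{lems:hammer}(ii) again to bound the integrand by $\varepsilon\,(\text{coercive term at }\mathfrak t_m)+c\,\big(\bfS(\nabla\bfu(\sigma))-\bfS(\nabla\bfu(\mathfrak t_m))\big):(\nabla\bfu(\sigma)-\nabla\bfu(\mathfrak t_m))\sim\varepsilon(\cdots)+c\,|\bfF(\nabla\bfu(\sigma))-\bfF(\nabla\bfu(\mathfrak t_m))|^2$. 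It then remains to show
\begin{align*}
\hat\E\otimes\E\sum_{m=1}^M\int_{\mathfrak t_{m-1}}^{\mathfrak t_m}\int_{\mathcal O}\big|\bfF(\nabla\bfu(\sigma))-\bfF(\nabla\bfu(\mathfrak t_m))\big|^2\dx\,\dd\sigma\leq c\,\tau^{2\alpha},
\end{align*}
and here the key computation is that, because $\mathfrak t_m$ is uniform on an interval of length $\tau/2$ independent of $\bfu$, the $\hat\E$-average turns the pointwise-in-time difference into an integral of the Gagliardo-type kernel $\|\bfF(\nabla\bfu(\sigma_1))-\bfF(\nabla\bfu(\sigma_2))\|_2^2/|\sigma_1-\sigma_2|^{1+2\alpha}$ against $|\sigma_1-\sigma_2|^{1+2\alpha}\lesssim\tau^{1+2\alpha}$ over a region of $\sigma$-measure $O(\tau)$; dividing by the normalizing $\tau$ from the uniform density and summing over $m$ yields exactly $\tau^{2\alpha}\|\bfF(\nabla\bfu)\|_{W^{\alpha,2}(0,T;L^2)}^2$, which is finite by \eqref{reg2}. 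Assembling (a)–(c), choosing $\varepsilon$ small to absorb the coercive contributions, applying discrete Gronwall to the $\|\bfe_m\|_2^2$ terms, taking the maximum over $m$ inside the expectation with BDG, and adding back the projection error $\bftheta_m$ (whose $W^{\alpha,2}$-in-time and $C^\alpha$-in-time regularity follow from \eqref{reg1}–\eqref{reg2}) gives \eqref{eq:thm:4}; the $h^2/\tau$ term arises from the interaction of the spatial projection error with the $1/\tau_m$-weighted discrete time-derivative in the estimate for $\bfe_m-\bfe_{m-1}$.
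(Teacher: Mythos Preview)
Your proposal is correct and follows essentially the same route as the paper. The only organizational difference is that the paper works directly with the full error $\bfe_m=\bfu(\mathfrak t_m)-\bfu_{h,m}$ and tests with a generic $\bfw_m-\bfu_{h,m}\in V_{h,0}$ (choosing $\bfw_m=\Pi_h\bfu(\mathfrak t_m)$ only at the end), whereas you split $\bfu(\mathfrak t_m)-\bfu_{h,m}=\bftheta_m+\bfe_m$ from the outset; since your test function $\bfe_m=\Pi_h\bfu(\mathfrak t_m)-\bfu_{h,m}$ coincides with the paper's $\bfw_m-\bfu_{h,m}$, the two computations are term-by-term equivalent, including the five deterministic pieces, the threefold splitting of the stochastic term handled by BDG/It\^o isometry plus \eqref{eq:phi}, and the key $\hat\E$-averaging that converts $\sum_m\int_{\mathfrak t_{m-1}}^{\mathfrak t_m}\|\bfF(\nabla\bfu(\sigma))-\bfF(\nabla\bfu(\mathfrak t_m))\|_2^2\,\dd\sigma$ into $c\,\tau^{2\alpha}\|\bfF(\nabla\bfu)\|_{W^{\alpha,2}}^2$.

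One small point to be explicit about: the random-grid averaging is needed not only for the time-consistency term (b) but also for the spatial term (a). Since \eqref{reg1} gives $\nabla\bfF(\nabla\bfu)$ only in $L^2(\Omega;L^2(0,T;L^2(\mathcal O)))$, the pointwise-in-time quantity $\sum_m\tau_m\|\nabla\bfF(\nabla\bfu(\mathfrak t_m))\|_2^2$ arising from Lemma~\ref{thm:app_V} is not a priori well-defined; the paper uses the same $\hat\E$-computation to turn it into $c\,\|\nabla\bfF(\nabla\bfu)\|_{L^2(0,T;L^2)}^2$. Your identification of the $h^2/\tau$ source is also consistent with the paper: it comes from the projection error $\|\bfu(\mathfrak t_m)-\Pi_h\bfu(\mathfrak t_m)\|_2^2\leq c\,h^2\|\nabla\bfu(\mathfrak t_m)\|_2^2$ appearing \emph{without} a $\tau_m$ prefactor (in the paper's $I_5$ and $\mathscr M_{m,3}$), which after summation over $m$ yields $\tfrac{h^2}{\tau}\sum_m\tau_m\|\nabla\bfu(\mathfrak t_m)\|_2^2$ and is then controlled via $\nabla\bfu\in L^2_{w^*}(\Omega;L^\infty(0,T;L^2))$.
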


\begin{remark}\label{rem:sharp}
\begin{itemize}
\item It is interesting to note that we do not need a condition between $h$ and $\tau$ as commonly used for the numerical approximation of nonlinear parabolic problems, see, for instance, \cite{DER}. This is due to the estimate for the error with respect to the $L^2(\mathcal O)$-projection from \cite{BDSW} stated in Theorem \ref{thm:app_V}.
\item Due to the random time-grid
we have a better control of the error between the continuous solutions and its projected version, see \eqref{eq:2201} below. This is different from the agrument in \cite{BDSW}
\end{itemize}
\end{remark}

The rest of this section is devoted to the proof of Theorem \ref{thm:4}.

\begin{proof}[Proof of Theorem \ref{thm:4}]
Due to \eqref{eq:stab'} and \eqref{reg1} we have
\begin{align*}
&\hat\E\otimes\E\bigg[\max_{1\leq m\leq M}\int_{\mathcal{O}}|\Pi_h\bfu(\mathfrak t_m)-\bfu(\mathfrak t_m)|^2\dx\bigg]\\&\leq ch^2
\hat\E\otimes\E\bigg[\max_{1\leq m\leq M}\int_{\mathcal{O}}|\nabla\bfu(\mathfrak t_m)|^2\dx\bigg]\leq ch^2
\E\bigg[\sup_{t\in(0,T)}\int_{\mathcal{O}}|\nabla\bfu(t)|^2\dx\bigg]\leq ch^2.
\end{align*}
Furthermore, it holds
\begin{align*}
&\hat\E\otimes\E\bigg[\sum_{m=1}^M \tau_m \int_{\mathcal{O}}|\bfF(\nabla\Pi_h\bfu(\mathfrak t_m))-\bfF(\nabla\bfu(\mathfrak t_m))|^2\dx\bigg]\\&\leq ch^2
\hat\E\otimes\E\bigg[\sum_{m=1}^M \tau_m \int_{\mathcal{O}}|\nabla\bfF(\nabla\bfu(\mathfrak t_m))|^2\dx\bigg]
\end{align*}
using \eqref{eq:app_V2}, where
\begin{align}\label{eq:2201}
\begin{aligned}
&\hat\E\otimes\E\bigg[\sum_{m=1}^M \tau_m\int_{\mathcal{O}}|\nabla\bfF(\nabla\bfu(\mathfrak t_m))|^2\dx\bigg]\\
&=\frac{2}{\tau}\int_{\tau-\tau/4}^{\tau+\tau/4} \xi\,\E\int_{\mathcal{O}}|\nabla\bfF(\nabla\bfu(\xi))|^2\dx\, \dif\xi\\
	&\qquad+\frac{4}{\tau^2}\sum_{m=2}^M \int_{(m-1)\tau-\tau/4}^{(m-1)\tau+\tau/4}\int_{m\tau-\tau/4}^{m\tau+\tau/4} (\xi-\zeta)\E\int_{\mathcal{O}}|\nabla\bfF(\nabla\bfu(\xi))|^2\dx\, \dif\xi\,\dif\zeta\\
&\leq  c \,\E\|\nabla\bfF(\nabla\bfu)\|_{L^2(0,\tau+\tau/4;L^2({\mathcal{O}}))}^2\\
	&\qquad+ \frac{c}{\tau}\sum_{m=2}^M\int_{(m-1)\tau-\tau/4}^{(m-1)\tau+\tau/4} \dif\zeta \,\E\|\nabla\bfF(\nabla\bfu)\|_{L^2(m\tau-\tau/4,m\tau+\tau/4;L^2({\mathcal{O}}))}^2\\
&\leq c\,\E\|\nabla\bfF(\nabla\bfu)\|_{L^2(0,T;L^2({\mathcal{O}}))}^2\leq\,c
\end{aligned}
\end{align}
by \eqref{reg1}.
Consequently, the claim follows if we can show
\begin{align}\label{eq:thm:4'}
\begin{aligned}
\hat\E&\otimes\E\bigg[\max_{1\leq m\leq M}\int_{\mathcal{O}}|\Pi_h\bfu(\mathfrak t_m)-\bfu_{h,m}|^2\dx\bigg]\\&+\hat\E\otimes\E\bigg[\sum_{m=1}^M \tau_m \int_{\mathcal{O}}|\bfF(\nabla\Pi_h\bfu(\mathfrak t_m))-\bfF(\nabla\bfu_{h,m})|^2\dx\bigg]\leq \,c\,\big(h^2+\tau^{2\alpha}\big).
\end{aligned}
\end{align}
Since the proof of \eqref{eq:thm:4'} is rather long we split it into several parts as follows:
\begin{itemize}
\item Step 1: We consider the equation for the error $\bfe_m=\Pi_h\bfu(\mathfrak t_m)-\bfu_{h,m}$ and test it with the error itself. As a result we express the error from time-step $m$ in terms the error
from time-step $m-1$.
\item Step 2: We apply the first estimates and iterate the expression from Step 1 which yields an estimate for the error quantity on the left-hand side of \eqref{eq:thm:4}.
\item Step 3: We estimate the stochastic integral in the estimate from Step 2.
\item Step 4: We estimate the error arising from the time-discretization. This crucially depends on the fractional differentiability in time which we assume for the continuous solution, cf. \eqref{reg2}, and
the averaging property of the expectation with respect to $(\hat\Omega,\hat{\mathfrak{F}},\hat{\mathbb P})$. 
\end{itemize}

\textbf{Step 1:} Subtracting \eqref{tdiscr} from the weak formulation of \eqref{eq:} we obtain
\begin{align*}
\begin{aligned}
\int_{\mathcal{O}}&\bfe_m\cdot\bfvarphi \dx +\int_{\mathfrak t_{m-1}}^{\mathfrak t_m}\int_{\mathcal{O}}\Big(\bfS(\nabla\bfu)-\bfS(\nabla\bfu_{h,m})\Big):\nabla\bfphi\dxs\\&=\int_{\mathcal{O}}\bfe_{m-1}\cdot\bfvarphi \dx+\int_{\mathcal{O}}\bigg(\int_{\mathfrak t_{m-1}}^{\mathfrak t_m}\Phi(\bfu)\,\dd W-\Phi(\bfu_{h,m-1})\,\Delta_mW\bigg)\cdot \bfvarphi\dx
\end{aligned}
\end{align*}
for every $\bfphi\in V_{h,0}$, or, equivalently,
\begin{align*}
\begin{aligned}
\int_{\mathcal{O}}&\bfe_m\cdot\bfvarphi \dx +\tau_m\int_{\mathcal{O}}\Big(\bfS(\nabla\Pi_h\bfu(\mathfrak t_m))-\bfS(\nabla\bfu_{h,m})\Big):\nabla\bfphi\dx\\
&=\int_{\mathfrak t_{m-1}}^{\mathfrak t_m}\int_{\mathcal{O}}\Big(\bfS(\nabla\Pi_h\bfu(\mathfrak t_m))-\bfS(\nabla\bfu(\sigma))\Big):\nabla\bfphi\dxs+\int_\mathcal{O}\bfe_{m-1}\cdot\bfvarphi \dx\\
&+\int_{\mathcal{O}}\bigg(\int_{\mathfrak t_{m-1}}^{\mathfrak t_m}\Phi(\bfu)\,\dd W-\Phi(\bfu_{h,m-1})\,\Delta_mW\bigg)\cdot \bfvarphi\dx.
\end{aligned}
\end{align*}
Setting $\bfphi=\bfe_{m}$ using the identity $\bfa\cdot(\bfa-\bfb)=\frac{1}{2}\big(|\bfa|^2-|\bfb|^2+|\bfa-\bfb|^2\big)$ (which holds for any $\bfa,\bfb\in\mathbb R^n$, here with $\bfa=\bfe_m$ and $\bfb=\bfe_{m-1}$), we gain
\begin{align*}
&\int_{\mathcal{O}}\frac{1}{2}\big(|\bfe_m|^2-|\bfe_{m-1}|^2+|\bfe_m-\bfe_{m-1}|^2\big) \dx\\&\quad\quad+\tau_m\int_{\mathcal{O}}\Big(\bfS(\nabla\Pi_h\bfu(\mathfrak t_m))-\bfS(\nabla\bfu_{h,m})\Big):\nabla\bfe_m\dx\\
&\quad\quad
=\tau_m\int_{\mathcal{O}}\Big(\bfS(\nabla\Pi_h\bfu(\mathfrak t_m))-\bfS(\nabla\bfu(\sigma))\Big):\nabla\bfe_{m}\dxs\\
&\quad\quad+\int_{\mathcal{O}}\bigg(\int_{\mathfrak t_{m-1}}^{\mathfrak t_m}\Phi(\bfu)\,\dd W-\Phi(\bfu_{h,m-1})\,\Delta_mW\bigg)\cdot \bfe_{m}\dx=:\db{I_{m,1}+I_{m,2}}.
\end{align*}
 \\
\textbf{Step 2:} Applying Lemma \ref{lems:hammer} we have
\begin{align*}
I_{m,1}&\leq \,\varepsilon\,\tau_m\int_{\mathcal{O}}\Big(\bfS(\nabla\Pi_h\bfu(\mathfrak t_m))-\bfS(\nabla\bfu_{h,m})\Big):\nabla(\Pi_h\bfu(\mathfrak t_m)-\bfu_{h,m})\dxs\\
&+c_\varepsilon \tau_m\int_{\mathcal{O}}\Big(\bfS(\nabla\Pi_h\bfu(\mathfrak t_m))-\bfS(\nabla\bfu({\sigma}))\Big):\nabla(\Pi_h\bfu(\mathfrak t_m)-\bfu({\sigma}))\dxs\\
&\leq\,c\,\varepsilon\,\tau_m\int_{\mathcal{O}}|\bfF(\nabla\Pi_h\bfu(\mathfrak t_m))-\bfF(\nabla\bfu_{h,m})|^2\dx\\&+c_\varepsilon\,\tau_m\int_{\mathcal{O}}|\bfF(\nabla\Pi_h\bfu(\mathfrak t_m))-\bfF(\nabla\bfu(\mathfrak t_m))|^2\dxs
\\&+c_\varepsilon\,\tau_m\int_{\mathcal{O}}|\bfF(\nabla{\bfu(\mathfrak t_m)})-\bfF(\nabla\bfu({\sigma}))|^2\dxs
\end{align*} 
for every $\varepsilon>0$.
Plugging all together and choosing $\varepsilon$ small enough (to absorb the corresponding term to the left-hand side) and using again Lemma \ref{lems:hammer} (a) we have shown
\begin{align*}
\int_{\mathcal{O}}&|\bfe_m|^2 \dx +\frac{1}{2}\int_{\mathcal{O}}|\bfe_m-\bfe_{m-1}|^2 \dx+c\tau_m\int_{\mathcal{O}}|\bfF(\nabla{\Pi_{h}}\bfu(\mathfrak t_m))-\bfF(\nabla\bfu_{h,m})|^2\dx\\
&\leq\,c\, \int_{\mathfrak t_{m-1}}^{\mathfrak t_m}\int_{\mathcal{O}}|\bfF(\nabla\Pi_h\bfu(\mathfrak t_m))-\bfF(\nabla\bfu(\mathfrak t_m))|^2\dxs+\int_{\mathcal{O}}|\bfe_{m-1}|^2 \dx\\
&+\,c\,\int_{\mathfrak t_{m-1}}^{\mathfrak t_m}\int_{\mathcal{O}}|\bfF(\nabla\bfu(\mathfrak t_m))-\bfF(\nabla\bfu(\sigma))|^2\dxs\\
&+c\,\int_{\mathcal{O}}\bigg(\int_{\mathfrak t_{m-1}}^{\mathfrak t_m}\Phi(\bfu)\,\dd W-\Phi(\bfu_{h,m-1})\,\Delta_m W\bigg)\cdot \bfe_{m}\dx.
\end{align*}
By 
\eqref{eq:app_V2} we conclude
\begin{align*}
\int_{\mathcal{O}}&|\bfe_m|^2 \dx +\frac{1}{2}\int_{\mathcal{O}}|\bfe_m-\bfe_{m-1}|^2 \dx+c\,\tau_m\int_{\mathcal{O}}|\bfF(\nabla{\Pi_{h}}\bfu(\mathfrak t_m))-\bfF(\nabla\bfu_m)|^2\dx\\
&\leq \,c\,\int_{\mathfrak t_{m-1}}^{\mathfrak t_m}\int_{\mathcal{O}}|\bfF(\nabla\bfu(\mathfrak t_m))-\bfF(\nabla\bfu(\sigma))|^2\dxs+c\,\tau_mh^2\int_{\mathcal{O}}|\nabla\bfF(\nabla\bfu(\mathfrak t_m))|^2\dx\\
&+\,\int_{\mathcal{O}}|\bfe_{m-1}|^2 \dx+c\,\int_{\mathcal{O}}\bigg(\int_{\mathfrak t_{m-1}}^{\mathfrak t_m}\Phi(\bfu)\,\dd W-\Phi(\bfu_{h,m-1})\,\Delta_m W\bigg)\cdot \bfe_{m}\dx.
\end{align*}
Iterating this inequality yields
\begin{align*}
\int_{\mathcal{O}}&|\bfe_m|^2 \dx + \frac{1}{2}\sum_{n=1}^m \int_{\mathcal{O}}|\bfe_m-\bfe_{m-1}|^2 \dx+c\sum_{n=1}^m \tau_n\int_{\mathcal{O}}|\bfF(\nabla{\Pi_{h}}\bfu(\mathfrak t_n))-\bfF(\nabla\bfu_{h,n})|^2\dx\\
&\leq \,c\sum_{n=1}^m\int_{\mathfrak t_{n-1}}^{\mathfrak t_n}\int_{\mathcal{O}}|\bfF(\nabla\bfu(\mathfrak t_n))-\bfF(\nabla\bfu(\sigma))|^2\dxs+\,\int_{\mathcal{O}}|\bfe_{0}|^2 \dx\\
&+c\,\sum_{n=1}^m\int_{\mathcal{O}}\bigg(\int_{\mathfrak t_{n-1}}^{\mathfrak t_n}\Phi(\bfu)\,\dd W-\Phi(\bfu_{h,n-1})\,\Delta_nW\bigg)\cdot \bfe_{n}\dx\\
&+c\,h^2\sum_{n=1}^m\tau_n\int_{\mathcal{O}}|\nabla\bfF(\nabla\bfu(\mathfrak t_n))|^2\dx.
\end{align*}
We are left to estimate the stochastic integral (which we call $ \mathscr M_m$) and the error arising from the time-discretization (the first term on the right-hand side). This will be done in the following two steps.\\
\textbf{Step 3:}
In order to estimate the stochastic term we write
\begin{align*}
\mathscr M_m&=\db{\sum_{n=1}^m I_{n,2}}=
\sum_{n=1}^m\int_{\mathcal{O}}\int_{\mathfrak t_{n-1}}^{\mathfrak t_n}\big(\Phi(\bfu)-\Phi(\bfu_{h,n-1})\big)\,\dd W\cdot \bfe_{n}\dx\\
&= \sum_{n=1}^m\int_{\mathcal{O}}\int_{\mathfrak t_{n-1}}^{\mathfrak t_n}\big(\Phi(\bfu)-\Phi(\bfu_{h,n-1})\big)\,\dd W\cdot \bfe_{n-1}\dx\\
&+ \sum_{n=1}^m\int_{\mathcal{O}}\int_{\mathfrak t_{n-1}}^{\mathfrak t_n}\big(\Phi(\bfu)-\Phi(\bfu_{h,n-1})\big)\,\dd W\cdot (\bfe_n-\bfe_{n-1})\dx=:\mathscr M_{m,1}+\mathscr M_{m,2}.
\end{align*}
For $\mathscr M_{m,1}$ we gain by the Burgholder-Davis-Gundy inequality
\begin{align*}
&\E\bigg[\max_{1\leq m\leq M}\big|\mathscr M_{m,1}\big|\bigg]\leq\,c\,\E\bigg[\sum_{n=1}^M\int_{\mathfrak t_{n-1}}^{\mathfrak t_n}\|\Phi(\bfu)-\Phi(\bfu_{h,n-1})\|^2_{L_2(\mathfrak U,L^2({\mathcal{O}}))}\|\bfe_{n-1}\|^2_{L^2({\mathcal{O}})}\dt\bigg]^{\frac{1}{2}}\\
&\leq\,c\,\E\bigg[\max_{1\leq n\leq M}\|\bfe_{n}\|_{L^2({\mathcal{O}})}\bigg(\sum_{n=1}^M\int_{\mathfrak t_{n-1}}^{\mathfrak t_n}\|\Phi(\bfu)-\Phi(\bfu_{h,n-1})\|^2_{L_2(\mathfrak U,L^2({\mathcal{O}}))}\dt\bigg)^{\frac{1}{2}}\bigg]\\
&\leq\,\varepsilon\,\E\bigg[\max_{1\leq n\leq M}\|\bfe_{n}\|^2_{L^2({\mathcal{O}})}\bigg]+\,c_\varepsilon\,\E\bigg[\sum_{n=1}^M\int_{\mathfrak t_{n-1}}^{\mathfrak t_n}\|\bfu-\bfu_{h,n-1}\|_{L^2({\mathcal{O}})}^2\dt\bigg]\\
&\leq\,\varepsilon\,\E\bigg[\max_{1\leq n\leq M}\|\bfe_n\|^2_{L^2({\mathcal{O}})}\bigg]+\,c_\varepsilon\,\E\bigg[\sum_{n=1}^M\int_{\mathfrak t_{n-1}}^{\mathfrak t_n}\|\bfu-\bfu(\mathfrak t_{n-1})\|_{L^2({\mathcal{O}})}^2\dt\bigg]\\
&+\,c_\varepsilon\,\E\bigg[\sum_{n=1}^M\int_{\mathfrak t_{n-1}}^{\mathfrak t_n}\|\bfu(\mathfrak t_{n-1})-\Pi_h\bfu(\mathfrak t_{n-1})\|_{L^2({\mathcal{O}})}^2\dt\bigg]+\,c_\varepsilon\,\E\bigg[\sum_{n=1}^M\int_{\mathfrak t_{n-1}}^{\mathfrak t_n}\|\bfe_{n-1}\|_{L^2({\mathcal{O}})}^2\dt\bigg]
\end{align*}
Here, we also used \eqref{eq:phi} as well as Young's inequality for $\varepsilon>0$ arbitrary. Applying \eqref{eq:stab'} as well as \eqref{reg1} and \eqref{reg2} we gain
\begin{align*}
&\E\bigg[\max_{1\leq m\leq M}\big|\mathscr M_{m,1}\big|\bigg]\leq\,\varepsilon\,\E\bigg[\max_{1\leq n\leq M}\|\bfe_n\|^2_{L^2({\mathcal{O}})}\bigg]+\,c_\varepsilon\,\E\bigg[\sum_{n=1}^M \tau_{n}\|\bfe_{n-1}\|_{L^2({\mathcal{O}})}^2\bigg]\\
&+c_\varepsilon\tau^{2\alpha}\E\big[\|\bfu\|_{C^\alpha([0,T],L^2(\mathcal O))}^2\big]+c_\varepsilon h^2\E\bigg[\sup_{t\in(0,T)}\int_{\mathcal{O}}|\nabla\bfu(t)|^2\dx\bigg]\\
&\leq\,\varepsilon\,\E\bigg[\max_{1\leq n\leq M}\|\bfe_n\|^2_{L^2({\mathcal{O}})}\bigg]+\,c_\varepsilon\,\E\bigg[\sum_{n=1}^M \tau_{n}\|\bfe_{n-1}\|_{L^2({\mathcal{O}})}^2\bigg]+c_\varepsilon\tau^{2\alpha}+c_\varepsilon h^2.
\end{align*}
As far as $\mathscr M_{m,2}$ is concerned we argue similarly. Using Cauchy-Schwartz inequality, Young's inequality, It\^{o}-isometry and \eqref{eq:phi} we have
\begin{align*}
&\E\bigg[\max_{1\leq m\leq M}|\mathscr M_{m,2}|\bigg]\\&\leq \E\bigg[ \sum_{n=1}^M\bigg( \varepsilon \|\bfe_n-\bfe_{n-1}\|_{L^2({\mathcal{O}})}^2 +c_\varepsilon \left\| \int_{\mathfrak t_{n-1}}^{\mathfrak t_n}\big(\Phi(\bfu)-\Phi(\bfu_{h,n-1})\big)\,\dd W  \right\|_{L^2({\mathcal{O}})}^2\bigg) \bigg]\\
&\leq \varepsilon\E\bigg[ \sum_{n=1}^M \|\bfe_n-\bfe_{n-1}\|_{L^2({\mathcal{O}})}^2 \bigg] + c_\varepsilon\,\E\bigg[\sum_{n=1}^M\int_{\mathfrak t_{n-1}}^{\mathfrak t_n}\|\bfu-\bfu_{h,n-1}\|_{L^2({\mathcal{O}})}^2\dt\bigg]\\
&\leq \varepsilon\E\bigg[ \sum_{n=1}^M \|\bfe_n-\bfe_{n-1}\|_{L^2({\mathcal{O}})}^2 \bigg] +c_\varepsilon \,\E\bigg[\sum_{n=1}^M \int_{\mathfrak t_{n-1}}^{\mathfrak t_n}\|\bfu-\bfu(\mathfrak t_{n-1})\|_{L^2({\mathcal{O}})}^2\dt\bigg]\\&+c_\varepsilon \,\E\bigg[\sum_{n=1}^M \tau_{n}\|\bfu(\mathfrak t_{n-1})-\Pi_h\bfu(\mathfrak t_{n-1})\|_{L^2({\mathcal{O}})}^2\bigg]+\,c_\varepsilon\,\E\bigg[\sum_{n=1}^M \tau_n\|\bfe_{n-1}\|_{L^2({\mathcal{O}})}^2\bigg]\\
&\leq \varepsilon\E\bigg[ \sum_{n=1}^M \|\bfe_n-\bfe_{n-1}\|_{L^2({\mathcal{O}})}^2 \bigg] +\,c_\varepsilon\,\E\bigg[\sum_{n=1}^M \tau_{n}\|\bfe_{n-1}\|_{L^2({\mathcal{O}})}^2\bigg]+c_\varepsilon\tau^{2\alpha}+c_{\varepsilon}h^2
\end{align*}
as a consequence of \eqref{reg2}.
Hence we may apply the discrete Gronwall lemma, choose $\varepsilon$ sufficiently small and apply \eqref{eq:stab'} to $\bfe_0$ to deduce
\begin{align}\label{eq:2201b}
\begin{aligned}
\E\bigg[&\max_{1\leq m\leq M}\int_{\mathcal{O}}|\bfe_m|^2 \dx +\sum_{m=1}^M\tau_m\int_{\mathcal{O}}|\bfF(\nabla \Pi_h\bfu(\mathfrak t_m))-\bfF(\nabla\bfu_m)|^2\dx\bigg]\\
&\leq\,c\,\E\bigg[\sum_{m=1}^M\int_{\mathfrak t_{m-1}}^{\mathfrak t_m}\int_{\mathcal{O}}|\bfF(\nabla\bfu(\mathfrak t_m))-\bfF(\nabla\bfu(\sigma))|^2\dxs\bigg]+c\tau^{2\alpha}+ch^2\\
&+c\,h^2\,\E\bigg[\sum_{m=1}^M \tau_m\int_{\mathcal{O}}|\nabla\bfF(\nabla\bfu(\mathfrak t_m))|^2\dx\bigg].
\end{aligned}
\end{align}
It remains to estimate the first term on the right-hand side in the last step.\\
\textbf{Step 4:}
Now, we observe that due to the definition of the points $\mathfrak t_m$, $m=1,\dots,M-1,$ as independent uniformly distributed random variables, the expectation $\hat\E$ can be computed explicitly as follows
\begin{align*}
&\hat\E\otimes\E\bigg[\sum_{m=1}^M\int_{\mathfrak t_{m-1}}^{\mathfrak t_m}\int_{\mathcal{O}}|\bfF(\nabla\bfu(\mathfrak t_m))-\bfF(\nabla\bfu(\sigma))|^2\dxs\bigg]\\
&=\hat\E\otimes\E\bigg[\int_{0}^{\mathfrak t_1}\int_{\mathcal{O}}|\bfF(\nabla\bfu(\mathfrak t_1))-\bfF(\nabla\bfu(\sigma))|^2\dxs\bigg]\\
	&\quad+\sum_{m=2}^{M}\hat\E\otimes\E\bigg[\int_{\mathfrak t_{m-1}}^{\mathfrak t_m}\int_{\mathcal{O}}|\bfF(\nabla\bfu(\mathfrak t_m))-\bfF(\nabla\bfu(\sigma))|^2\dxs\bigg]\\
&=\frac{2}{\tau}\E\int_{\tau-\tau/4}^{\tau+\tau/4}\int_{0}^{\xi}\int_{\mathcal{O}}|\bfF(\nabla\bfu(\xi))-\bfF(\nabla\bfu(\sigma))|^2\dxs\, \dif\xi\\
	&\qquad+\frac{4}{\tau^2}\sum_{m=2}^{M}\int_{(m-1)\tau-\tau/4}^{(m-1)\tau+\tau/4}\E\int_{m\tau-\tau/4}^{m\tau+\tau/4}\int_{\zeta}^{\xi}\int_{\mathcal{O}}|\bfF(\nabla\bfu(\xi))-\bfF(\nabla\bfu(\sigma))|^2\dxs\, \dif\xi\,\dif\zeta\\
&\leq c\tau^{2\alpha} \E\|\bfF(\nabla\bfu)\|_{W^{\alpha,2}(0,\tau+\tau/4;L^2({\mathcal{O}}))}^2\\
	&\qquad+\frac{c\tau^{2\alpha}}{\tau}\sum_{m=2}^M \int_{(m-1)\tau-\tau/4}^{(m-1)\tau+\tau/4}\dif\zeta\,\E\|\bfF(\nabla\bfu)\|_{W^{\alpha,2}({(m-1)}\tau-\tau/4,m\tau+\tau/4;L^2({\mathcal{O}}))}^2\\
&\leq c\tau^{2\alpha}\E\|\bfF(\nabla\bfu)\|_{W^{\alpha,2}(0,T;L^2({\mathcal{O}}))}^2
\end{align*}
recalling the definition of the fractional Sobolev norm.
The last term in \eqref{eq:2201b} can be controlled by \eqref{eq:2201}.
Consequently,  in view of \eqref{reg1} and \eqref{reg2}, the proof of \eqref{eq:thm:4'} is complete and the claim follows.
\end{proof}

\section{Numerical experiments}
\label{sec:num}

\begin{figure}[t]
\includegraphics[width=6cm]{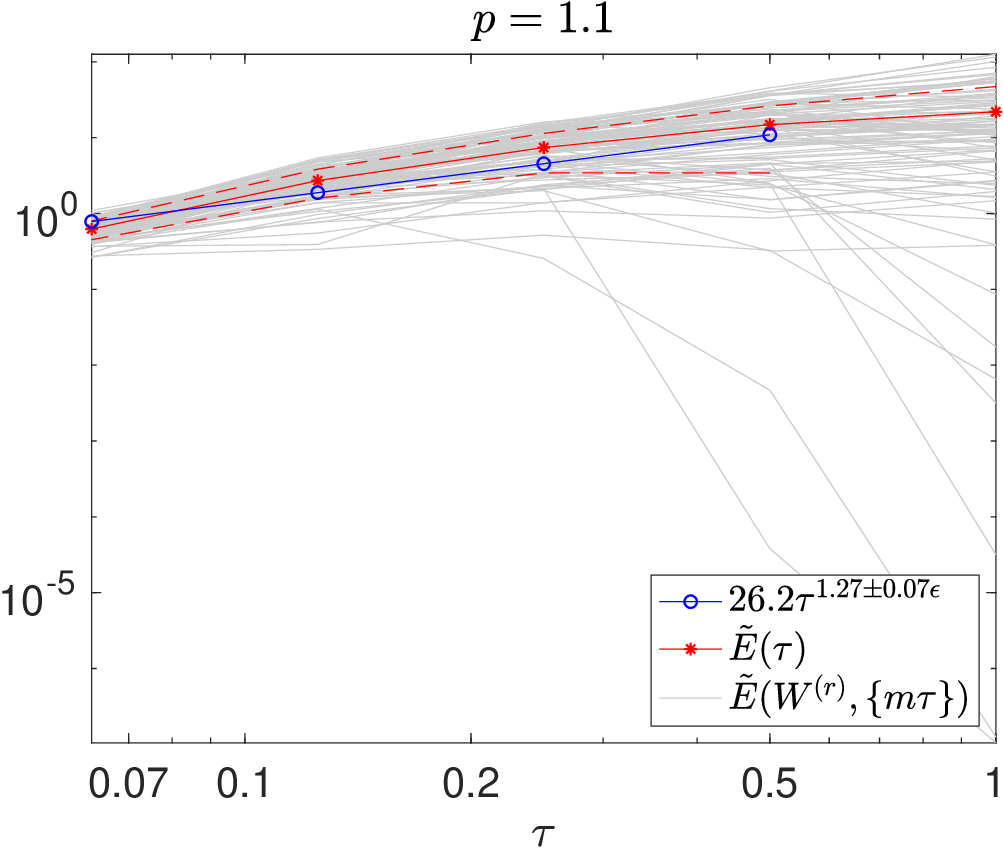}
\includegraphics[width=6cm]{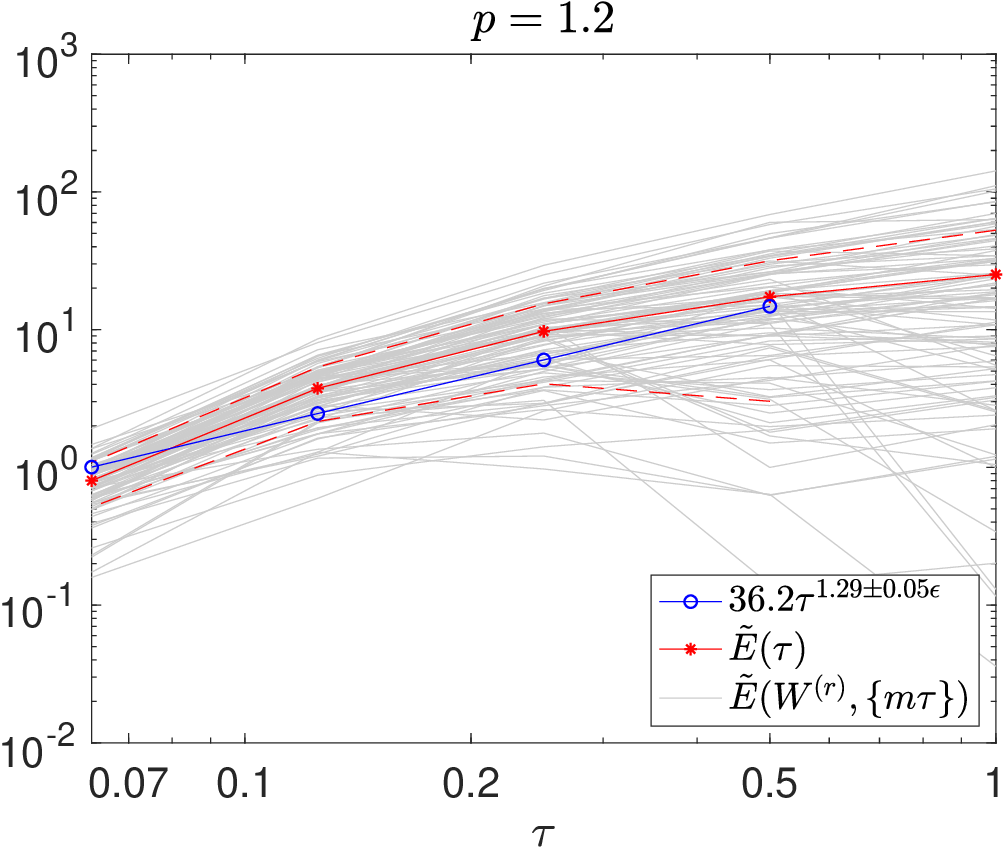} \\
\vspace{0.7cm}
\includegraphics[width=6cm]{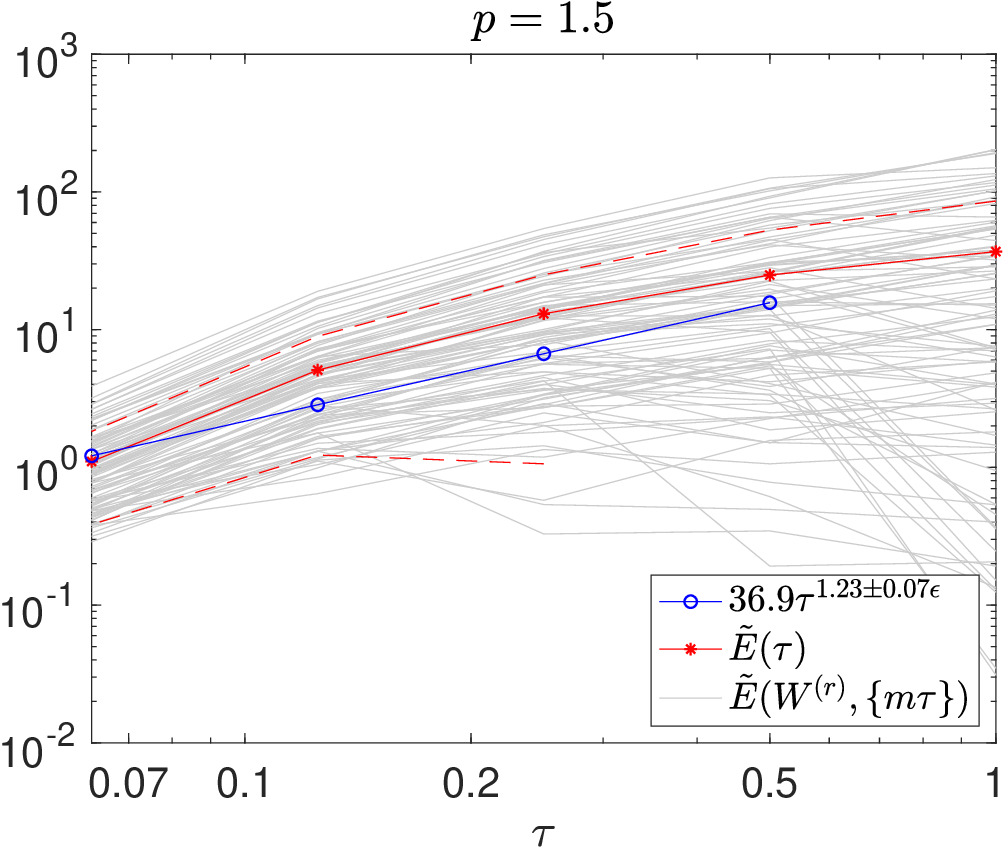}
\includegraphics[width=6cm]{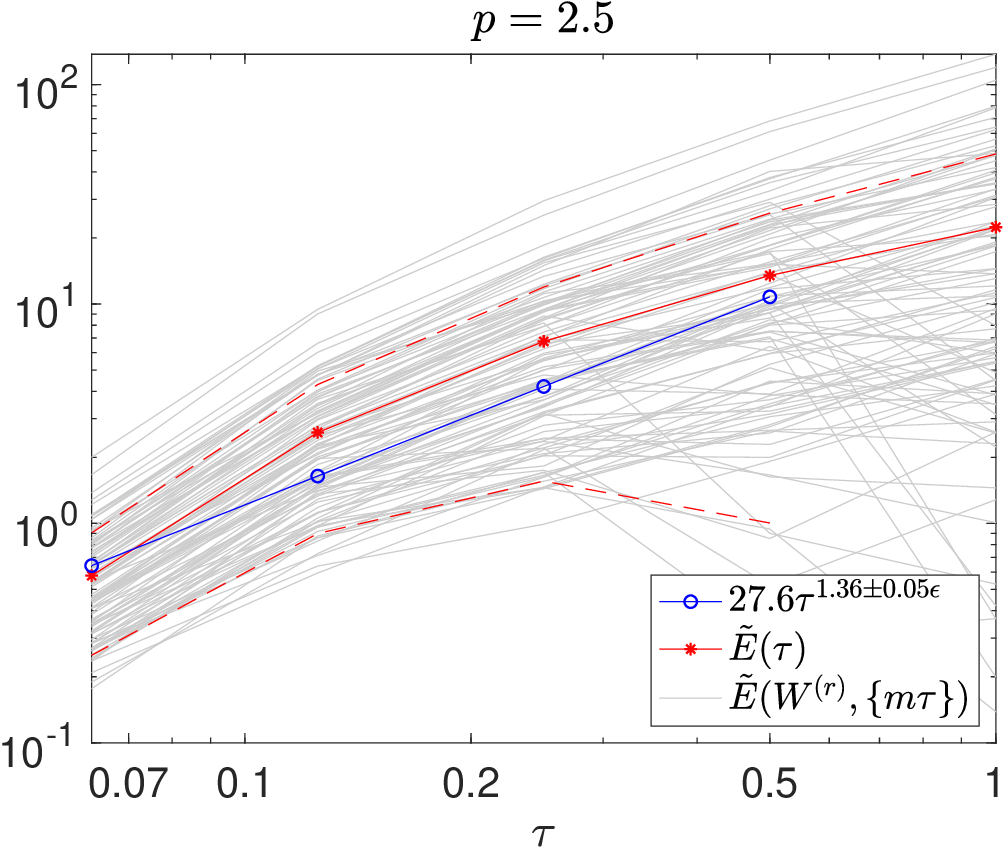}
\caption{Convergence: the function $\tilde{E}(\tau)$ of \eqref{e:errest} for $p \in \{1,1.2,1.5,2.5\}$.
The convergence rate $O(\tau^{1.3\pm0.1})$ is a biased estimator for $E(\tau)$; an unbiased estimator is $O(\tau^{0.88\pm0.14})$.\label{f:num}}
\end{figure}

Let $\mathcal{O}$ be a polygon, $\mathscr{T}_h$ is its simplicial mesh, and $V_h$ is the space of piecewise linear elements on $\mathscr{T}_h$ as introduced in Section \ref{sec:Vh}. We say that \eqref{tdiscr} describes a ``semi-algorithm'' for the numerical solution of the $p$-Laplacian, meaning that it does not indicate how the implicit equation can be solved for the unknown $\mathbf{u}_{h,m}$. Fortunately, this is a convex optimization problem, and the specific case of the $p$-Laplacian was recently analyzed in \cite{loisel2020}. We now briefly outline this procedure in the case $\kappa = 0$: If $\bfu_{h,m-1}$ and $\Delta_m W$ are given we have
\begin{align}
\mathbf{u}_{h,m} = \argmin_{\mathbf{u} \in V_h} \int_{\mathcal{O}} \Big({1 \over 2}|\mathbf{u}|^2 + {\tau_m \over p} |\nabla \mathbf{u}|^p
-
\overbrace{(\mathbf{u}_{h,m-1}+\Phi \Delta_m W)}^{\mathbf{f}} \mathbf{u}\Big)
\dx. \label{e:optversion}
\end{align}
Here, we have used the Dirichlet principle to rephrase the variational problem \eqref{tdiscr} in its elliptic energy-minimization form.
The quantity that we have denoted $\mathbf{f} = \mathbf{f}_{h,m}$ acts like a forcing for an elliptic $p$-Laplacian. Because $\mathbf{u}$ is piecewise linear,
one checks that the first two terms of the integrand are piecewise constant.
Recall further that $W$ does not depend on $x$ here. Hence, if we further assume that $\Phi$ is piecewise constant, then $\mathbf{f}$ and $\mathbf{u}$ are piecewise linear and so $\mathbf{f}\cdot\mathbf{u}$ is piecewise quadratic; thus, the integral in \eqref{e:optversion} can be computed exactly for any $\mathbf{u} \in V_h$. Because $\Phi$ is piecewise constant (and hence possibly discontinuous), the piecewise linear function $\mathbf{f}$ is likewise possibly discontinuous. Recall ``broken finite element'' space $\tilde{V}_h$ of possibly discontinuous piecewise linear functions on $\mathscr{T}_h$, we have that $\mathbf{f} \in \tilde{V}_h$.

Denote by $\{x^{(k)} \}$ the vertices of the triangulation $\mathscr{T}_h$, and $\{\mathcal{S}_j\}$ the simplices of $\mathscr{T}_h$. Denote by $\boldsymbol{\phi}_k$ the usual basis of $V_h$, so that $\mathbf{u} = \sum_k u_k \boldsymbol{\phi}_k$, where $u_k = \mathbf{u}(x^{(k)})$. Likewise, we denote by $\tilde{\boldsymbol{\phi}}_k$ the basis functions of the broken finite element space $\tilde{V}_h$.

Let $D^{(i)}$ be the discrete partial derivative matrix, defined by
$(D^{(i)})_{j,k} = {\partial \over \partial x_i}\boldsymbol{\phi}_k|_{\mathcal{S}_j}$. Note that this is well-defined because ${\partial \over \partial x_i}\boldsymbol{\phi}_k$ is constant on individual simplices $\mathcal{S}_j$.
After discretization by finite elements, one arrives at the following fully-discrete convex optimization problem:
\begin{align}
u_{h,m} = \argmin_{u} {1 \over 2}u^TPu + {\tau_m \over p} \bigg(\sum_{i,j} |\mathcal{S}_j|\, |(D^{(i)}u)_j|^p\bigg) - f^T\tilde{P}u; \label{e:discversion}
\end{align}
where $|\mathcal{S}_j|$ is the surface area of $\mathcal{S}_j$ and $P$ is the usual mass matrix. The matrix $\tilde{P}$ is the mass matrix for a broken finite element function $\mathbf{f} \in \tilde{V}_h$ and a function $\mathbf{u} \in V_h$. The boundary condition $\mathbf{u}|_{\partial \mathcal{O}}=0$ can be implemented via the linear constraints $u_k = 0$ for every $x^{(k)} \in \partial \mathcal{O}$. Practically speaking, one can avoid these extra constraints by making them ``explicit'' and writing $u = R^Tu_I$, where $u_I$ is $u$ restricted to the interior of $\mathcal{O}$, and $R^T$ is the relevant prolongation matrix to the closure of $\mathcal{O}$.

Problem \eqref{e:discversion} can be solved, e.g. by barrier methods. For simplicity, in the present paper, we use the the industrial convex programming solver MOSEK \cite{Andersen00} and CVX \cite{Grant2014}.

The goal of these experiments is to perform a numerical verification of our main estimate, given by \eqref{eq:thm:4}, with the main objective being of estimating the order $a = 2\alpha$ of the method in the variable $\tau$. In all of our experiments, we use $h \leq \tau$. Thus, the right-hand-side $R(\tau)$ of \eqref{eq:thm:4} satisfies $c\tau^a \leq R(\tau) \leq c\tau^a + O(\tau)$, and $O(\tau)$ is negligible compared to $c\tau^a$, because $a<1$ and $\tau$ is small. Thus, we seek to approximate the left-hand-side of \eqref{eq:thm:4} by $c\tau^a$ for some $c,a > 0$, and the regularity $\alpha$ will be estimated by 
\begin{align}
\alpha = a/2. \label{e:myalpha}
\end{align}

We now describe how we approximate the left-hand-side of \eqref{eq:thm:4}. We seek to measure the scaling in the $\tau$ variable, so instead of the notation $\mathbf{u}_{h,m}$, we use the notation $\mathbf{u}_{h}(\mathfrak{t}_m)$. If we have two time discretizations $\{\mathfrak{t}_m\} \subset \{\tilde{\mathfrak{t}}_m\}$, with corresponding solutions $\mathbf{u}_{h}(\mathfrak{t}_m)$ and $\mathbf{\tilde{u}}_{h}(\tilde{\mathfrak{t}}_m)$, then we say that $\mathbf{\tilde{u}}$ is a time-refinement of $\mathbf{u}$, since its time grid is finer. The error of $\mathbf{u}$ is $\mathbf{e}_h(\mathfrak{t}_m) = \mathbf{u}(\mathfrak{t}_m) - \mathbf{u}_{h}(\mathfrak{t}_m)$. Computing $\mathbf{u}_h$ and $\mathbf{e}_h$ exactly is equivalent to computing the true solution $\mathbf{u}$, so one cannot compute $\mathbf{e}$ exactly, and one must instead use an approximation. If $\mathbf{\tilde{u}}$ uses a finer time grid than $\mathbf{u}$, the quantity $\mathbf{\tilde{e}}_h(\mathfrak{t}_m) = \mathbf{\tilde{u}}_{h}(\mathfrak{t}_m) - \mathbf{u}_{h}(\mathfrak{t}_m)$ is an approximation of the error $\mathbf{e}_h$.

Denote by $E(W,\{\tau_m\})$ the quantity appearing inside the expectations on the left-hand-side of \eqref{eq:thm:4}, and $\tilde{E}$ the approximation of $E$ obtained by replacing $\mathbf{u}(\mathfrak{t}_m)$ by $\mathbf{\tilde{u}}_h(\mathfrak{t}_m)$. It is difficult to reconcile the random time-step distributions hypothesized in Theorem \ref{thm:4} with the requirement that $\{\tau_m\} \subset \{\tilde{\tau}_m\}$ so we use the deterministic times $\mathfrak{t}_m = m\tau$ and $\tilde{\mathfrak{t}}_m = m\tilde{\tau}$, instead of random time steps. Then, to ensure $\{\mathfrak{t}_m\} \subset \{\tilde{\mathfrak{t}}_m\}$, it suffices to ensure that $\tau$ is an integer multiple of $\tilde{\tau}$. Even though it was important for the analysis to have random time steps, we suspect this is only an issue as $\tau \to 0$. For our numerical experiments with finite $\tau$, our deterministic time-stepping strategy did not seem to have a deleterious effect on convergence.

With these deterministic time-steps, the left-hand-side of \eqref{eq:thm:4} equals to $E(\tau) = \mathbb{E}(E(W,\{m\tau\}))$. We estimate the expectation by Monte-Carlo experiments, i.e.
\begin{align}
\tilde{E}(\tau) & = {1 \over n_r}\sum_{r=1}^{n_r} \tilde{E}(W^{(r)},\{m\tau\}), \label{e:errest}
\end{align}
where $W^{(1)},\ldots,W^{(n_r)}$ are $n_r$ independent instances of the Wiener process $W$. We use $n_r = 100$ repetitions to compute $\tilde{E}(\tau)$.

We have sketched the results of these numerical experiments in Figure \ref{f:num}. We have solved the $p$-Laplacian for $p \in \{1.1,1.2,1.5,2.5\}$. The domain $\mathcal{O} = (0,1) \times (0,1)$ is the unit square and the grid size is fixed at $h=1/32$ and the time-step sizes vary in $\tau \in \{1,1/2,\ldots,1/32\}$. The initial data are $(u_{h,0})_k = 1$ for all $x^{(k)} \in \mathscr{T}_h$. The noise coefficient is $\Phi(x) = |x|^{-1/2}$, as approximated in a piecewise constant manner on $\mathscr{T}_h$. For each value of $p$, each instance $W^{(r)}$ of the Brownian motion $W$ gives rise to a function $\tau \to \tilde{E}(W^{(r)},\{m\tau\})$. There are $n_r = 100$ such functions for each value of $p$, plotted in light-gray. The averages of these functions correspond to $\tilde{E}(\tau)$ as per \eqref{e:errest}, plotted in red. The dashed red curves correspond to plus or minus one standard deviation away from $\tilde{E}(\tau)$.

We ran these experiments on a Macbook Pro, which takes about 10 hours to complete. Although Monte Carlo algorithms are ``embarrassingly parallel'', in practice we cannot run these experiments in parallel because of technical limitations in the third-party library CVX.

To estimate the convergence order $a$, we use linear regression to estimate the coefficients $c,a$ of $\tilde{E}(W^{(r)},\{m\tau\}) \approx c\tau^a$. 
For each value of $p$, we have sketched in Figure \ref{f:num} the corresponding line of regression as a solid blue line.
The notation $\tilde{c}h^{\tilde{a} \pm \sigma \epsilon}$ in the legend indicates that the estimated coefficient of linear regression is $\tilde{a}$, and that the standard deviation for this estimate $\tilde{a}$ of $a$ is $\sigma$ (e.g. $\epsilon$ is a standard normal random variable). Observe that across all values of $p$, we have approximately $\tilde{a} = 1.3 \pm 0.1$, hence using \eqref{e:myalpha}, we find $\tilde{\alpha} \approx 0.65 \pm 0.05$.
This is slightly larger than the maximum envisioned value $\alpha \approx 0.5$, which is probably caused by slight estimation bias, as we now describe.

\begin{lemma} \label{l:biascorr}
Let $0 < a \leq 1$.
Let $G(\tau)$ and $\tilde{G}(\tau)$ be functions that satisfy
\begin{align*}
G(\tau) & = \theta(\tau) \tau^a \quad\text{and}\quad \tilde{G}(\tau) = (G(\tau)-G(\tilde{\tau}))\eta(\tau,\tilde{\tau}),
\end{align*}
where $\theta(\tau)$ and $\eta(\tau,\tilde{\tau})$ are bounded nonzero continuous functions of $0 \leq \tilde{\tau} \leq \tau \leq 1$ with bounded derivatives $\theta'$ and ${\partial \over \partial \tau}\eta$. Then, for sufficiently small $0<\tilde{\tau}<\tau$,
\begin{align}
{\dd \over \dd\log \tau}\log G(\tau)  = a + O(\tau) \quad \text{and}\quad 
{\dd \over \dd\log \tau}\log \tilde{G}(\tau)  = 
(a+O(\tau))
\overbrace{
\left(
\tau^a \over \tau^a - \tilde{\tau}^a
\right)
}^{\beta_{\tau,\tilde{\tau}}(a)}. \label{e:biascorr}
\end{align}
We call $\beta = \beta_{\tau,\tilde{\tau}}(a)$ the {\bf bias}.
\end{lemma}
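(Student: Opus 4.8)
The plan is to reduce everything to the identity $\frac{\dd}{\dd\log\tau}=\tau\frac{\dd}{\dd\tau}$, under which the logarithmic derivative of a product is the sum of the logarithmic derivatives of its factors, together with the remark that any differentiable factor which is bounded and bounded away from zero contributes only an $O(\tau)$ term to such a logarithmic derivative. First I would dispose of $G$: since $\theta$ is continuous and nowhere zero on the compact interval $[0,1]$ it is bounded away from zero there, and $\theta'$ is bounded by hypothesis, so $\tau\theta'(\tau)/\theta(\tau)=O(\tau)$. Writing $\log|G(\tau)|=\log|\theta(\tau)|+a\log\tau$ and applying $\tau\frac{\dd}{\dd\tau}$ gives
\[
\frac{\dd}{\dd\log\tau}\log G(\tau)=\tau\Big(\frac{\theta'(\tau)}{\theta(\tau)}+\frac a\tau\Big)=a+O(\tau),
\]
which is the first claim; the same computation records the auxiliary identity $\tau G'(\tau)=(a+O(\tau))\,G(\tau)$, used below.

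Next, fixing $\tilde\tau$ and viewing $\tilde G$ as a function of $\tau$ alone, I would check that $\log|\tilde G|$ is meaningful: on the connected parameter set $\{0\le\tilde\tau\le\tau\le1\}$ the nonvanishing continuous $\eta$ has constant sign and is bounded away from zero, while from $G(\tau)-G(\tilde\tau)=\theta(\tau)(\tau^a-\tilde\tau^a)-(\theta(\tilde\tau)-\theta(\tau))\,\tilde\tau^a$ and $|\theta(\tilde\tau)-\theta(\tau)|\le\|\theta'\|_\infty\,\tau$ one sees that, in the regime where $\tilde\tau$ stays bounded away from $\tau$ (so that $\tau^a-\tilde\tau^a\sim\tau^a$; e.g.\ when the refined grid satisfies $\tilde\tau\le\tau/2$ as in the experiments), the difference $G(\tau)-G(\tilde\tau)$ has the sign of $\theta$ and is comparable to $\tau^a$. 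Differentiating $\log|\tilde G|=\log|G(\tau)-G(\tilde\tau)|+\log|\eta(\tau,\tilde\tau)|$ then yields
\[
\frac{\dd}{\dd\log\tau}\log\tilde G(\tau)=\frac{\tau G'(\tau)}{G(\tau)-G(\tilde\tau)}+\frac{\tau\,\partial_\tau\eta(\tau,\tilde\tau)}{\eta(\tau,\tilde\tau)}=(a+O(\tau))\,\frac{G(\tau)}{G(\tau)-G(\tilde\tau)}+O(\tau),
\]
using the auxiliary identity and the boundedness of $\partial_\tau\eta/\eta$.

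It then remains to identify $G(\tau)/(G(\tau)-G(\tilde\tau))$ with $\beta_{\tau,\tilde\tau}(a)$ up to an $O(\tau)$ factor. Writing this ratio as $\big(1-\tfrac{\theta(\tilde\tau)}{\theta(\tau)}(\tilde\tau/\tau)^a\big)^{-1}$ and using $\theta(\tilde\tau)/\theta(\tau)=1+O(\tau)$ (mean value theorem, the lower bound on $|\theta|$, and $|\tau-\tilde\tau|\le\tau$), a short manipulation gives
\[
\frac{G(\tau)}{G(\tau)-G(\tilde\tau)}=\frac{\tau^a}{\tau^a-\tilde\tau^a+O(\tau)\,\tilde\tau^a}=(1+O(\tau))\,\beta_{\tau,\tilde\tau}(a),
\]
where to keep the $O(\tau)$ uniform I use $\tilde\tau^a\le\tau^a$ and the boundedness of $\beta_{\tau,\tilde\tau}(a)$. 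Substituting into the previous display, multiplying out $(a+O(\tau))(1+O(\tau))=a+O(\tau)$, and absorbing the trailing additive $O(\tau)$ into $O(\tau)\,\beta_{\tau,\tilde\tau}(a)$ (legitimate since $\beta_{\tau,\tilde\tau}(a)\ge1$) produces exactly $\frac{\dd}{\dd\log\tau}\log\tilde G(\tau)=(a+O(\tau))\,\beta_{\tau,\tilde\tau}(a)$.

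The step I expect to be the main obstacle is the last one: passing from the difference $G(\tau)-G(\tilde\tau)=\theta(\tau)\tau^a-\theta(\tilde\tau)\tilde\tau^a$ of two \emph{differently weighted} powers to the clean power difference $\tau^a-\tilde\tau^a$, and controlling the resulting relative error uniformly by $O(\tau)$. This is precisely what the bounded derivative of $\theta$ buys — it forces $\theta(\tilde\tau)-\theta(\tau)=O(\tau)$ regardless of where $\tilde\tau$ lies in $(0,\tau)$ — while the uniformity of the $O(\tau)$ additionally requires the bias $\beta_{\tau,\tilde\tau}(a)$ to stay bounded, which is automatic whenever the refined grid is a fixed factor finer than the coarse one.
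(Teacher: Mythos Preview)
Your argument follows the same overall route as the paper (logarithmic differentiation, split off the $\eta$ factor, then compare $G(\tau)-G(\tilde\tau)$ with $\theta(\tau)(\tau^a-\tilde\tau^a)$), and the computation for $G$ is identical. The difference lies in the last step, and it costs you uniformity.

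You estimate $\theta(\tilde\tau)/\theta(\tau)=1+O(\tau)$ via $|\tau-\tilde\tau|\le\tau$, which leads to
\[
\frac{G(\tau)}{G(\tau)-G(\tilde\tau)}=\frac{\tau^a}{\tau^a-\tilde\tau^a+O(\tau)\,\tilde\tau^a},
\]
and then you need $\tfrac{\tilde\tau^a}{\tau^a-\tilde\tau^a}=\beta-1$ to be bounded to pull the $O(\tau)$ outside. This forces you to add the hypothesis ``$\tilde\tau$ a fixed factor smaller than $\tau$'', which is not in the lemma. The paper instead keeps the sharper bound $\theta(\tau)-\theta(\tilde\tau)=\theta'(\xi_1)(\tau-\tilde\tau)$ and then applies the mean value theorem a second time to $\tau\mapsto\tau^a$, obtaining $\tau^a-\tilde\tau^a=a\,\xi_2^{a-1}(\tau-\tilde\tau)$. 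The two factors of $(\tau-\tilde\tau)$ cancel, leaving the relative correction as $\tfrac{\theta'(\xi_1)}{a\,\theta(\tau)}\,\tilde\tau^a\xi_2^{1-a}$; since $\tilde\tau^a\xi_2^{1-a}$ is a weighted geometric mean lying in $[\tilde\tau,\tau]$, this is $O(\tau)$ \emph{uniformly in $\tilde\tau$}, with no boundedness of $\beta$ required. That uniformity is exactly what the paper exploits afterwards when it lets $\beta\to\infty$ as $\tau\to\tilde\tau^+$. So your proof is correct under your added side condition, but to match the lemma as stated you should replace the crude $|\tau-\tilde\tau|\le\tau$ by the double application of the mean value theorem.
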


\begin{proof}
Direct calculation gives:
\begin{align*}
{\dd \over \dd\log \tau}\log G(\tau) & = a + {\theta'(\tau)\tau \over \theta(\tau)},
 \\
{\dd \over \dd\log \tau}\log \tilde{G}(\tau) & = 
{{\tau^a(\theta'(\tau)\tau + \theta(\tau)a)}
\over
\underbrace{\theta(\tau)\tau^a - \theta(\tilde{\tau}) \tilde{\tau}^a}_{B}
}
+
\tau{{\partial \eta \over \partial \tau}(\tau,\tilde{\tau}) \over \eta(\tau,\tilde{\tau})}.
\end{align*}
By the intermediate value theorem (IVT),
\begin{align*}
B & = \theta(\tau)(\tau^a-\tilde{\tau}^a)
-(\theta(\tau)-\theta(\tilde{\tau}))\tilde{\tau}^a
= \theta(\tau)(\tau^a-\tilde{\tau}^a)
-\theta'(\xi_1)(\tau-\tilde{\tau})\tilde{\tau}^a,
\end{align*}
where $\tilde{\tau} < \xi_1 < \tau$. The IVT applied to $\tau^a$ gives $a\xi_2^{a-1}(\tau-\tilde{\tau}) = \tau^a - \tilde{\tau}^a$ with $\tilde{\tau}<\xi_2<\tau$ and hence
\begin{align*}
B & = \theta(\tau)(\tau^a-\tilde{\tau}^a)\bigg(1-\frac{\theta'(\xi_1)}{\theta(\tau)a}\overbrace{\tilde{\tau}^a\xi_2^{1-a}}^{\xi_3}\bigg),
\end{align*}
where $\tilde\tau < \xi_3 < \tau$, because $\xi_3$ is a geometric mean. Finally, we use $1/(1-z) = 1+O(z)$ for $|z|<0.9$ to find \eqref{e:biascorr}.
\end{proof}

The logic of Lemma \ref{l:biascorr} is that if $G(\tau) = g(\tau)-g(0)$ is some error then one can estimate a hypothesized convergence rate $G(\tau) = O(\tau^a)$ by computing ${\dd \over \dd\log \tau} \log G$ and taking a sufficiently small $\tau$. However, if one uses $\tilde{G}(\tau) = g(\tau)-g(\tilde{\tau})$ to estimate $G(\tau)$, then the estimator $\tilde{a} = {\dd \over \dd\log\tau} \log\tilde{G}$ (for some given values of $\tau > \tilde{\tau}$) produces a biased estimate $\tilde{a}$. The bias factor $\beta$ is always larger than $1$, so $\tilde{a} > a$ for all sufficiently small $\tau$. Even worse, if $\tau \to \tilde{\tau}^+$, the bias factor becomes unbounded ($\beta \to \infty$), so that the estimate $\tilde{a}$ is ``infinitely optimistic''.

To estimate the convergence rate $a$ of our algorithm, we want to compute 
$$a = \lim_{\tau \to 0}{\dd \over \dd\log \tau} \log E(\tau).$$
Even if $E$ is differentiable, $\tilde{E}$ is unlikely to be differentiable because it is an average over finitely many noisy SPDE solutions, hence even the biased estimate ${\dd \over \dd\log \tau} \log \tilde{E}$ is not available. Thus, we used an averaged version of ${\dd \over \log \tau}\log \tilde{E}(\tau)$ by setting $\tilde{\tau} = 1/32$ and performing linear regression over $\tau=1/16,1/8,1/4,1/2$ to obtain approximately $\tilde{a} = 1.3 \pm 0.1$. 
Let $\beta_{\tau,\tilde{\tau}}(a)$ be given by \eqref{e:biascorr}. The average bias is $\beta_{\mu}(a) = {1 \over 4}(\beta_{{1 \over 2},{1 \over 32}}(a)+\ldots+\beta_{{1 \over 16},{1 \over 32}}(a)).$
We recover $a$ from $\tilde{a}$ by approximately solving \eqref{e:biascorr}, neglecting the $O(\tau)$ term. In other words, we (numerically) solve $a\beta_{\mu}(a) = \tilde{a} = 1.3 \pm 0.1$ for the unknown $a$, to obtain the bias-corrected estimate
$a \approx 0.88 \pm 0.14$ and $\alpha = 0.44 \pm 0.07$, which is entirely consistent with Theorem \ref{thm:4}.

{\small
\subsection*{Acknowledgments}
The authors wish to thank Lars Diening for helpful
discussions and comments. They are also grateful to the anomymous referee and the associate editor for the careful reading of the paper and the valuable suggestions.}



\begin{thebibliography}{[M]}
\bibitem{Andersen00} E. D. Andersen, K. D. Andersen: The MOSEK interior point optimizer for linear programming: an implementation of the homogeneous algorithm. In High performance optimization (pp. 197--232). Springer, Boston, MA. (2000)

\bibitem{Assyr}
 A. Abdulle, G. Garegnani:
 Random time step probabilistic methods for uncertainty quantification in chaotic and geometric numerical integration. Statistics and Computing, 1-26. (2020)

\bibitem{BaLi1} J. W. Barrett, W.B. Liu: Finite element approximation of the p-Laplacian. Math. Comput. 61 (204),
523--537. (1993)
\bibitem{BaLi2} J. W. Barrett, W. B. Liu: Finite element approximation of the parabolic p-Laplacian. SIAM J. Numer.
Anal. 31 (2), 413--428. (1994)
\bibitem{BDR} L. C. Berselli, L. Diening, M. R{\r u}{\v z}i{\v c}ka:
Optimal error estimates for a semi-implicit Euler scheme for incompressible fluids with shear dependent viscosities. SIAM J. Num. Anal. 47, 2177--2202. (2012)
\bibitem{Br3} D. Breit: Regularity theory for nonlinear systems of SPDEs. Manus. Math. 146, 329--349 (2015). 
\bibitem{Br} D. Breit: Existence theory for stochastic power law fluids. J. Math. Fluid Mech. 17, 295--326. (2015)
\bibitem{BDSW} D. Breit, L. Diening, J. Storn, J. Wichmann: The parabolic $p$-Laplacian with fractional differentiability. IMA J. Num. Anal. DOI:10.1093/imanum/draa081
\bibitem{BD} D. Breit, A. Dodgson: Convergence rates for the numerical approximation of the 2D stochastic Navier--Stokes equations. Numer. Math. 147, 553--578. (2021)
\bibitem{BrHo} D. Breit, M. Hofmanov\'{a}: On time regularity of stochastic evolution equations with monotone coefficients. C. R. Acad. Sci. Paris Ser. I 354, 33--37. (2016)
\bibitem{BrMe} D. Breit, P. R. Mensah: Space-time approximation of parabolic systems with variable growth. IMA J. Num. Anal. 40, 2505--2552. (2020)
\bibitem{BCP}
Z. Brze\'zniak, E. Carelli, A. Prohl: Finite-element-based discretizations of the incompressible Navier--Stokes
equations with multiplicative random forcing. IMA J. Num. Anal. 33, 771--824. (2013)
\bibitem{CP}
E.~Carelli, J. A.~Prohl: Rates of convergence for discretizations of the stochastic incompressible Navier-Stokes equations. SIAM J. Numer. Anal. 50(5), 2467--2496. (2012)
   \bibitem{PrZa} G. Da Prato, J. Zabczyk: Stochastic Equations in Infinite Dimensions, Encyclopedia
Math. Appl., vol. 44, Cambridge University Press, Cambridge.  (1992)
\bibitem{DiFr} E. DiBenedetto, A. Friedmann, Regularity of solutions of nonlinear degenerate parabolic
systems, J. Reine Angew. Math. 349, pp. 83--128. (1984)
\bibitem{DER} L. Diening, C. Ebmeyer, M. R{\r u}{\v z}i{\v c}ka: Optimal convergence for the implicit space-time discretization
of parabolic systems with p-structure. SIAM J. Numer. Anal. 45(2), 457--472. (2007)
\bibitem{DieE08}
L. Diening, F. Ettwein: Fractional estimates for non-differentiable
  elliptic systems with general growth. Forum Math. 20 (3), 523--556. (2008)
 \bibitem{DiRu0} L. Diening, M. R\r u\v zi\v cka: Non-Newtonian fluids and function spaces, Nonlinear Analysis, Function Spaces and Applications, Vol. 8, 95-143. (2007)
\bibitem{DuMiSt} F. Duzaar, R. Mingione, K. Steffen: Parabolic Systems
with Polynomial Growth and Regularity. Mem. Amer. Math. Soc. 214, no. 1005. (2011)


\bibitem{EKKL}
M. Eisenmann, M. Kov\'acs, R. Kruse, S. Larsson:  On a randomized, backward Euler method for nonlinear evolution equations with time-irregular coefficients. Found. Comp. Math., 19(6), 1387-1430. (2019)

\bibitem{Scheichl} A. Ferreiro-Castilla, A.E. Kyprianou, R. Scheichl: An Euler-Poisson Scheme for L\'evy driven SDEs. J. Appl. Probab. 53(1), 262--278 . (2016)
\bibitem{G}B. Gess: Strong Solutions for Stochastic Partial Differential Equations of Gradient Type.
J. Funct. Anal. 263 (8), 2355--2383. (2012)
\bibitem{Grant2014} M. Grant, S. Boyd: CVX: Matlab software for disciplined convex programming, version 2.1. (2014)
\bibitem{GM1} I. Gy\"ongy, A. Millet: On Discretization Schemes for Stochastic Evolution Equations. Potent.
Anal. 23, 99--134. (2005)
\bibitem{GM2} I. Gy\"ongy, A. Millet: Rate of Convergence of space-time Discretization for Stochastic Evolution Equations. Potent.
Anal. 30, 29--64. (2009)

\bibitem{HKS20}
M. Hofmanov\'a, M. Kn\"oller, K. Schratz:  Randomized exponential integrator for modulated nonlinear Schr\"odinger equations. IMA J. Num. Anal. 40(4), 2143--2162. (2020)



\bibitem{JN09}
A. Jentzen and A. Neuenkirch: A random Euler scheme for Carath\'eodory differential equations.
J. Comput. Appl. Math., 224(1), 346--359. (2009)


\bibitem{KW2}
R. Kruse, Y. Wu: A randomized and fully discrete Galerkin finite element method for semilinear stochastic evolution equations. Mathematics of Computation, 88(320), 2793--2825.  (2019)

\bibitem{KWa}
R. Kruse and Y. Wu:  A randomized Milstein method for stochastic differential equations with non-differentiable drift coefficients. Discrete \& Continuous Dynamical Systems - B, 24(8). 3475--3502. (2019)



\bibitem{KW}
R. Kruse and Y. Wu: Error analysis of randomized Runge-Kutta methods for differential equations
with time-irregular coefficients. Comput. Methods Appl. Math., 17(3), 479--498. (2017)




\bibitem{Kr} N. V. Krylov: A $W^{n,2}$-theory of the Dirichlet problem for SPDEs in general
smooth domains. Probab. Theory Related Fields 98 (3), 389--421. (1994)
\bibitem{KrRo1} N. V. Krylov, B. L. Rozovskii: On the Cauchy problem for linear stochastic
partial differetial equations. Izv. Akad. Nauk. SSSR Ser. Mat. 41 (6),
1329-1347; English transl. Math. USSR Izv. 11. (1977)
\bibitem{KrRo2} N. V. Krylov, B. L. Rozovskii: Stochastic evolution equations. Itogi Nauki i
Tekhniki. Ser. Sovrem. Probl. Mat. 14, VINITI, Moscow, 71--146; English
transl. J. Sov. Math., 16 (4) (1981), 1233--1277. (1979)
\bibitem{LaUrSo} Ladyzhenskaya, O.A., Solonnikov, V.A., Ural'tseva, N.N.: Linear and quasi-linear equations
of parabolic type. Translations of Mathematical Monographs. 23. Amer. Math. Soc., Providence, RI. (1967)
\bibitem{loisel2020} S. Loisel: Efficient algorithms for solving the $p$-Laplacian in polynomial time. Num. Math. 146, 369--400. (2020)
\bibitem{LPS} G. J. Lord, C. Powell, T. Shardlow: An Introduction to Computational Stochastic PDEs. Cambridge Texts in Applied Mathematics. Cambridge University Press. (2014)

  \bibitem{Pa} E. Pardoux: Equations aux d\'{e}riv\'{e}es Partielles stochastiques non lin\'{e}aires monotones. Etude de solutions fortes de
type It\^{o}, Ph.D. thesis, Universit\'{e} Paris Sud. (1975)






\bibitem{TeYo} Y. Terasawa, N. Yoshida: Stochastic power-law fluids: existence and uniqueness of weak solutions. Ann. Appl. Probab. 21 (5), 1827--1859. (2011)
219--240. 
\bibitem{Wi} Wiegner, M.: On $C^\alpha$-regularity of the gradient of solutions of degenerate parabolic systems.
Ann. Mat. Pura Appl. 145 (IV), 385--405. (1986)

\bibitem{Ya1} Y. Yan: Semidiscrete Galerkin Approximation for a Linear Stochastic Parabolic Partial Differential Equation Driven by an Additive Noise. Num. Math. 44, 829--847. (2004)

\bibitem{Ya2} Y. Yan: Galerkin finite element methods for stochastic parabolic partial differential equations,
SIAM J. Numer. Anal. 43, 1363--1384. (2005)

\bibitem{Yo} N. Yoshida: Stochastic Shear thickening fluids: strong convergence of the Galerkin approximation and the energy inequality. Ann. Appl. Probab. 22 (3), 1215--1242. (2012)


\end{thebibliography}
\end{document}